\providecommand{\U}[1]{\protect\rule{.1in}{.1in}}
\newtheorem{theorem}{Theorem}
\newtheorem{corollary}[theorem]{Corollary}
\newtheorem{definition}[theorem]{Definition}
\newtheorem{lemma}[theorem]{Lemma}
\newtheorem{proposition}[theorem]{Proposition}
\newtheorem{remark}[theorem]{Remark}
\newenvironment{proof}[1][Proof]{\noindent\textbf{#1.} }{\ \rule{0.5em}{0.5em}}
\numberwithin{equation}{section}
\DeclareMathOperator{\argmin}{argmin}
\begin{document}

\title{A variational approach to symmetry, monotonicity, and comparison for
doubly-nonlinear equations\thanks{\textbf{Acknowledgment.}\quad\textrm{This
work has been supported by the Austrian Science Fund (FWF) project P27052-N25.
The author would like to acknowledge the kind hospitality of the Erwin
Schr\"{o}dinger International Institute for Mathematics and Physics, where
part of this research was developed under the frame of the Thematic Program
\textit{Nonlinear Flows}.}}}
\author{{Stefano Melchionna\thanks{ Faculty of Mathematics, University of Vienna,
Oskar-Morgenstern-Platz 1, 1090 Wien, Austria. $\qquad$ E-mail:
\texttt{stefano.melchionna@univie.ac.at}} }}
\maketitle

\begin{abstract}
We advance a variational method to prove qualitative properties such as
symmetries, monotonicity, upper and lower bounds, sign properties, and
comparison principles for a large class of doubly-nonlinear evolutionary
problems including gradient flows, some nonlocal problems, and systems of
nonlinear parabolic equations.

Our method is based on the so-called Weighted-Energy-Dissipation (WED)
variational approach. This consists in defining a global parameter-dependent
functional over entire trajectories and proving that its minimizers converge
to solutions to the target problem as the parameter goes to zero. Qualitative
properties and comparison principles can be easily proved for minimizers of
the WED functional and, by passing to the limit, for the limiting problem.

Several applications of the abstract results to systems of nonlinear PDEs and
to fractional/nonlocal problems are presented. Eventually, we present some
extensions of this approach in order to deal with rate-independent systems and
hyperbolic problems.

\end{abstract}

\noindent\textbf{Key words:}~~Qualitative properties, comparison principles,
variational approach, WED functionals.

\vspace{2mm}

\noindent\textbf{AMS (MOS) subject clas\-si\-fi\-ca\-tion:} 35B06, 35B51, 49J27.

\newpage

\section{Introduction}

In this paper we illustrate a general procedure to prove qualitative
properties and comparison principles for the abstract doubly-nonlinear system
given by%
\begin{align}
\mathrm{d}_{V}\psi(u^{\prime})+\eta^{1}-\eta^{2}-f(u) &  =0\text{ a.e. in
}(0,T)\text{,}\label{nonconvex target equation}\\
\eta^{1} &  \in\partial\varphi^{1}(u)\text{, }\eta^{2}\in\partial\varphi
^{2}(u)\text{,}\label{non conv targ 2}\\
u(0) &  =u_{0}\text{.}\label{ic}%
\end{align}
Here $u^{\prime}$ denotes the time derivative of the unknown trajectory
$t\in(0,T)\longmapsto u(t)\in V$, $\varphi^{1},\varphi^{2},\psi$ are proper,
lower semicontinuous, and convex functionals on a Banach space $V$,
$\partial\varphi^{1}$ and $\partial\varphi^{2}$ denote the subdifferentials of
$\varphi^{1}$ and $\varphi^{2}$ respectively, $\mathrm{d}_{V}\psi$ is the
Fr\'{e}chet differential of $\psi$, and $f:V\rightarrow V^{\ast}$ is a
continuous map. Note that we do not assume a differential structure on $f$,
thus $f$ is \textit{nonpotential}.

The abstract system (\ref{nonconvex target equation})-(\ref{ic}) describes a
variety of dissipative problems, e.g., (degenerate) parabolic equations,
doubly-nonlinear equations, fractional and nonlocal problems, some ODEs, and
systems of reaction-diffusion equations \cite{Me}. Such a nonpotential
perturbation of doubly-nonlinear problems have been studied by many authors,
see, e.g., \cite{Ot1, Ot2} (see also \cite{Co, Co-Vi} for the potential case:
$f \equiv 0$).

Recently, a variational approach to the doubly-nonlinear system
(\ref{nonconvex target equation})-(\ref{ic}) has been proposed in
\cite{Ak-Me}. This approach relies on the so-called
\textit{Weighted-Energy-Dissipation} (WED) procedure for doubly nonlinear
systems \cite{Ak-Me, Ak-St, Ak-St2, Ak-St3, Mi-St}. Given a target
evolutionary problem, the WED approach consists in defining a global
parameter-dependent functional $I_{\varepsilon}$ over entire trajectories and
proving that its minimizers converge, up to subsequences, to solutions to the
target problem, as the parameter $\varepsilon$ goes to $0$.

The WED formalism has been used by Ilmanen \cite{Il} in the context of
mean-curvature flows, and later reconsidered by Mielke and Ortiz \cite{Mi-Or}
for rate-independent systems. The gradient flow case with $\lambda$-convex
potentials has been studied by Mielke and Stefanelli~\cite{Mi-St}. Akagi and
Stefanelli have extended the theory to the genuinely nonconvex case for
gradient flows \cite{Ak-St} and to convex doubly-nonlinear systems
\cite{Ak-St2, Ak-St3}, namely to problem (\ref{nonconvex target equation}%
)-(\ref{ic}) with $\varphi^{2}=0$ and $f=0$. Finally, an analogous approach
has been applied to some hyperbolic problems, e.g., the semilinear wave
equation \cite{Li-St,Se-Ti,St}, and to Lagrangian Mechanics equations
\cite{Li-St2}.

In the case of $f\neq0$, the lack of potential for $f$ opens on the one hand
the possibility of considering systems instead of equations. On the other
hand, it determines an obstruction to the application of the WED procedure
described above to problem (\ref{nonconvex target equation})-(\ref{ic}), for
the latter has in general no variational nature. In particular, it is not
possible to build a WED functional for problem
(\ref{nonconvex target equation})-(\ref{ic}). This difficulty may be tamed by
combining the WED technique with a fixed-point argument \cite{Ak-Me, Me}.
Since our argument relies on the WED procedure for problem
(\ref{nonconvex target equation})-(\ref{ic}), we now briefly sketch the
results in \cite{Ak-Me}, for the reader's convenience. Under the assumption of
Fr\'{e}chet differentiability of $\varphi^{2}$ and of $p$-growth for the
dissipation potential $\psi$, for all $v\in L^{p}(0,T;V)$ the WED-type
functional $I_{\varepsilon,w}:L^{p}(0,T;V)\rightarrow(-\infty,+\infty]$ is
defined by
\begin{align}
I_{\varepsilon,w}(u) &  =\left\{
\begin{array}
[c]{cc}%
\displaystyle{\int_{0}^{T}\mathrm{e}^{-t/\varepsilon}\left(  \varepsilon
\psi(u^{\prime})+\varphi^{1}(u)-\left\langle w,u\right\rangle _{V}\right)
\mathrm{d}t} & \text{if }u\in K(u_{0})\cap L^{m}(0,T;X)\text{,}\\
+\infty & \text{else,}%
\end{array}
\right.  \label{wed}\\
w &  =F(v):=\mathrm{d}_{V}\varphi^{2}(v)+f(v)\text{,}%
\end{align}
where $K(u_{0})=\{u\in W^{1,p}(0,T;V):u(0)=u_{0}\}$. For all $v\in
L^{p}(0,T;V)$, an approximation of the Direct Method \cite{Da} ensures that
the functional $I_{\varepsilon,w}$ admits a unique minimizer $u_{\varepsilon
,v}$ over $K(u_{0})$. Moreover, the map
\begin{equation}
S:v\in L^{p}(0,T;V)\longmapsto u_{\varepsilon,v}\in L^{p}(0,T;V)\label{esse}%
\end{equation}
can be proved to have a fixed-point $u_{\varepsilon}$ fulfilling
\begin{align}
u_{\varepsilon} &  =\underset{\tilde{u}}{\argmin}\text{ }I_{\varepsilon
,F(u_{\varepsilon})}(\tilde{u})\nonumber\\
&  =\underset{\tilde{u}}{\argmin}\text{ }\int_{0}^{T}\mathrm{e}%
^{-t/\varepsilon}\left(  \varepsilon\psi(\tilde{u}^{\prime})+\varphi
^{1}(\tilde{u})-\left\langle \mathrm{d}_{V}\varphi^{2}(u_{\varepsilon
})+f(u_{\varepsilon}),\tilde{u}\right\rangle _{V}\right)  \mathrm{d}%
t\label{fixed point wed}%
\end{align}
and solving an elliptic-in-time regularization of
(\ref{nonconvex target equation})-(\ref{ic}) given by
\begin{align}
-\varepsilon\left(  \mathrm{d}_{V}\psi(u_{\varepsilon}^{\prime})\right)
^{\prime}+\mathrm{d}_{V}\psi(u_{\varepsilon}^{\prime})+\eta^{1}-\eta
^{2}-f(u_{\varepsilon}) &  =0\text{ a.e. in }(0,T)\text{,}%
\label{nonconvex target equation eps}\\
\eta^{1} &  \in\partial\varphi^{1}(u_{\varepsilon})\text{, }\eta
^{2}=\mathrm{d}_{V}\varphi^{2}(u_{\varepsilon})\text{,}\\
\mathrm{d}_{V}\psi(u_{\varepsilon}^{\prime})(T) &  =0\text{,}\\
u_{\varepsilon}(0) &  =u_{0}\text{.}\label{ic eps}%
\end{align}
Finally, $u_{\varepsilon}$ converges, up to subsequences, to solutions to
(\ref{nonconvex target equation})-(\ref{ic}).

In the first part of this work we prove qualitative properties such as
symmetries, monotonicity, upper and lower bounds, sign properties, for
solutions to system (\ref{nonconvex target equation})-(\ref{ic}), provided
some compatibility conditions (e.g. symmetry of the domain or compatibility of
the initial data). More precisely, as we deal with equations with possibly
nonunique solutions, we prove the existence of at least one solution to
problem (\ref{nonconvex target equation})-(\ref{ic}) satisfying the
qualitative property. A standard approach suggests to describe qualitative
properties (e.g., the axial symmetry of a function) as invariance under the
action of a map \cite{Ca} (e.g. the reflection with respect to a given axis).
Following this idea we aim to prove existence of solutions $u$ to system
(\ref{nonconvex target equation})-(\ref{ic}) which are invariant under the
action of a map $R$, namely such that $u=Ru$. This will follow by i) proving
that the functional $I_{\varepsilon,w}$ is nonincreasing under the action of
the map $R$ and ii) checking that the invariance property is preserved by
taking the limit $\varepsilon\rightarrow0$.

Let us now briefly comment on some peculiarities and advantages of our method
and compare it with other techniques used to prove qualitative properties of
solutions to PDEs. We start by observing that our result is extremely
versatile. Indeed it applies to a large number of qualitative properties
(symmetries, upper and lower bounds, monotonicity, sign properties, and
combinations of them, see Corollary \ref{coroll composition}), and a variety
of evolution equations, e.g., dissipative systems of the form
(\ref{nonconvex target equation})-(\ref{ic}) (see Section 3), but also
rate-independent systems and hyperbolic problems (see Section
\ref{section other}).

Let us also note that our technique applies to maps $R$ which are not
necessary invertible (such as rearrangements or truncations). In particular,
$R$ does not generate a group of transformations. This implies that the theory
of invariance under the action of Lie groups (see, e.g., \cite{Ca}) may not be
directly used in our setting.

As a byproduct of our results, we get also existence of $R$-invariant
solutions to the elliptic-in-time regularization
(\ref{nonconvex target equation eps})-(\ref{ic eps}) of
(\ref{nonconvex target equation})-(\ref{ic}).

It is worth noting that our technique does not require regularity of solutions
to the target problem. This is not the case for others methods used for
proving qualitative properties of solutions to PDEs. Moving planes and sliding
methods \cite{Be-Ni, Br2} for instance require classical regularity, as they
rely on classical comparison principles and on the Hopf Lemma.

Furthermore, we can treat the case of problems with nonunique solutions.
Indeed, the uniqueness of solution to (\ref{nonconvex target equation}%
)-(\ref{ic}) may genuinely fail (e.g., the sublinear heat equation
$u_{t}-\Delta u=u^{q}$, $0<q<1$ has positive solutions even for zero initial
data). In this case it might be trivial to prove that $R$ maps solutions into
solutions (namely the problem is invariant under the action of $R$). However,
due to the lack of uniqueness one cannot conclude the existence of invariant
solutions. Our method is hence particularly useful in the case of
nonuniqueness of solutions.

In the second part of this work we use the WED approach to prove a comparison
principle for system (\ref{nonconvex target equation})-(\ref{ic}) in the case
of $f$ being independent of $u$. Our strategy consists in combining the WED
minimization with an abstract comparison principle, see Lemma
\ref{abstract comparison principle} below. More precisely, we i) prove a
comparison principle for minimizers of the WED functional and ii) pass to the
limit as $\varepsilon\rightarrow0$. It is noteworthy that the comparison
principle established in the present paper is not standard: given two initial
data $u_{0},v_{0}$ such that $u_{0}\leq v_{0}$ in a suitable sense, we show
the existence of at least two solutions $u,v$ such that $u(0)=u_{0}$,
$v(0)=v_{0}$, and $u\leq v$. We emphasize that we cannot expect the relation
$u\leq v$ to hold for all $u,v$ solutions to (\ref{nonconvex target equation}%
)-(\ref{ic}) such that $u(0)=u_{0}$, $v(0)=v_{0}$, as problem
(\ref{nonconvex target equation})-(\ref{ic}) has in general nonunique solutions.

Section \ref{section other} addresses by similar techniques different types of
evolution equations. In particular, we prove a comparison principle for
rate-independent systems of the form%
\[
\partial\psi(u^{\prime})+\partial\phi(u)-\Delta u\ni0\text{ in }\Omega
\times(0,T)\text{, }%
\]
where $\psi$ and $\phi$ are proper, lower semicontinuous, convex functionals,
$\psi$ is $1$-homogeneous, and $\Omega$ is a bounded subset of
$\mathbb{R}^{d}$.

Moreover, we check symmetries of solutions to the semilinear wave equation
\[
\rho u_{tt}+\nu u_{t}-\Delta u+F^{\prime}(u)=0\text{ in }\Omega\times(0,T),
\]
where $\Omega\subset\mathbb{R}^{d}$ is open, $\rho>0,~v\geq0$ are constants,
and $F\in C^{1}(\mathbb{R})$ has polynomial growth.

Finally, we tackle the lagrangian system
\[
Mu_{tt}+\nu u_{t}+\nabla U(u)=0\text{ in }(0,T)\text{, }%
\]
where $u:(0,T) \rightarrow\mathbb{R}^{d}$, $M$ is a positive definite
$d\times d$ matrix, $U\in C^{1}(\mathbb{R}^{d})$ is bounded from below and
convex, and $\nu\geq0$.

The paper is organized as follows. We fix the notation, enlist assumptions,
and we state and prove our abstract results in Section
\ref{section assumptions}. We present several examples of application to PDEs
and integrodifferential problems in Section \ref{section examples}. Finally,
Section \ref{section other} is devoted to rate-independent systems and
hyperbolic problems.

\section{Notation, assumptions, and main results \label{section assumptions}%
\label{section notation}}

Given any real Banach space $E$, we denote by $E^{\ast}$ its dual, by
$|\cdot|_{E}$ its norm, and by $\left\langle \cdot,\cdot\right\rangle _{E}$
the duality pairing between $E^{\ast}$ and $E$. Let $\phi:E\rightarrow
(-\infty,+\infty]$ be a convex functional, we denote its subdifferential by
$\partial_{E}\phi$ and its Fr\'{e}chet differential by $\mathrm{d}_{E}\phi$,
whenever it exists.

For all $h>1$, $\Theta_{h}(E)$ denotes the set of all lower semicontinuous
convex functionals $\phi:E\rightarrow\lbrack0,+\infty)$ such that there exists
a strictly positive constant $C$ such that
\begin{align*}
|u|_{E}^{h}  &  \leq C\left(  \phi(u)+1\right)  \text{ for all }u\in E,\\
|\xi|_{E^{\ast}}^{h^{\prime}}  &  \leq C(|u|_{E}^{h}+1), h^{\prime}=\frac
{h}{h-1}~\text{\ for all }\xi\in\partial_{E} \phi(u)\text{.}%
\end{align*}

Given a set $A$ and a map $R:A\rightarrow A$, we denote the set of fixed
points of $R$ by $A_{R}$, namely $A_{R}=\{a\in A:Ra=a\}$ is the set of
$R$\textit{-invariant} elements of $A$.

The symbols $\gamma^{+}$ and $\gamma^{-}$ stand for the positive and the
negative part in $\mathbb{R}$, namely $\gamma^{+}=\max\{\gamma,0\}$ and
$\gamma^{-}=-\min\{\gamma,0\}$, while the symbols $\vee$ and $\wedge$ denote
the maximum and the minimum respectively: $a\vee b=\max\{a,b\}$, $a\wedge
b=\min\{a,b\}$.

Let $V$ be a uniformly convex Banach space and $X$ be a reflexive Banach space
such that%
\[
X\hookrightarrow V\text{ and }V^{\ast}\hookrightarrow X^{\ast}%
\]
with densely-defined compact canonical injections. Let $\psi,\varphi
^{1},\varphi^{2}:V\rightarrow\lbrack0,\infty)$ be proper, lower semicontinuous
(l.s.c.), and convex functionals. Furthermore, we assume $\psi$ to be
Fr\'{e}chet differentiable and $\varphi^{1}$ to be strictly convex. Let
$p,m\in(1,\infty)$ be fixed. Assume that $\psi\in\Theta_{p}(V)$, $\varphi
^{1}\in\Theta_{m}(X)$. Moreover, we ask for\ constants $k\in\lbrack
0,1),~C_{1}>0$, and a nondecreasing function $\ell$ on $[0,+\infty)$ such
that
\begin{equation}
\varphi^{2}(u)\leq k\varphi^{1}(u)+C_{1} \label{hp phi2}%
\end{equation}
for all $u\in D(\varphi^{1})$ and
\begin{equation}
|\eta^{2}|_{V^{\ast}}^{p^{\prime}}\leq\ell(|u|_{V})(\varphi^{1}(u)+1)
\label{hp phi 2.2}%
\end{equation}
for all $u\in D(\varphi^{2})\ $and $\eta^{2}\in\partial_{V}\varphi^{2}(u)$.
Let $f:V\rightarrow V^{\ast}$ be such that
\begin{equation}
|f(u)|_{V^{\ast}}^{p^{\prime}}\leq C_{2}(|u|_{V}^{p}+1) \label{hp f}%
\end{equation}
for some constant $C_{2}\geq0$ and $f:L^{p}(0,T;V)\rightarrow L^{p^{\prime}%
}(0,T;V^{\ast})$ be continuous. Finally, we assume $u_{0}\in D(\varphi^{1})$.

Before stating our main results, let us now introduce the definition of strong
solution to system (\ref{nonconvex target equation})-(\ref{ic}).

\begin{definition}
[Strong solution]\label{solution to nonconvex target}A function
$u:[0,T]\rightarrow V$ is a \emph{strong solution}\ of system
\emph{(\ref{nonconvex target equation})-(\ref{ic}) }if

\begin{enumerate}
\item $u\in L^{m}(0,T;X)\cap W^{1,p}(0,T;V)$, $u(t)\in D(\partial_{V}%
\varphi^{1}(u))$ for a.e. $t\in(0,T)$, $\mathrm{d}_{V}\psi(u^{\prime})\in
L^{p^{\prime}}(0,T;V^{\ast})$,

\item there exist $\eta^{1}\in L^{m^{\prime}}(0,T;X^{\ast})\cap L^{p^{\prime}%
}(0,T;V^{\ast})$ and $\eta^{2}\in L^{p^{\prime}}(0,T;V^{\ast})$ such that
$\eta^{1}\in\partial_{V}\varphi^{1}(u)$ and $\eta^{2}\in\partial_{V}%
\varphi^{2}(u)$,

\item $\mathrm{d}_{V}\psi(u^{\prime})+\eta^{1}-\eta^{2}-f(u)=0$ in $V^{\ast}$
a.e. in $(0,T)$, and $u(0)=u_{0}$.
\end{enumerate}
\end{definition}

\subsection{Main result 1: qualitative properties\label{sec qual prop}}

In order to define a single-valued map $S$ as in (\ref{esse}), in this
subsection we additionally assume that $\varphi^{2}:V\rightarrow
(-\infty,+\infty)$ is Fr\'{e}chet differentiable.

We now introduce assumptions on the abstract maps $R:V\rightarrow V$ which
describe qualitative properties.

\begin{description}
\item[(R1)] $V_{R}$ is nonempty, convex, and closed in $V$. Assume that $Ru\in
W^{1,p}\left(  0,T;V\right)  $ for every $u\in W^{1,p}\left(  0,T;V\right)  $.

\item[(R2)] Define $F(v):=(\mathrm{d}_{V}\varphi^{2}+f)(v)$, and assume either
$\delta V_{R}\subset V_{R}$ for every $\delta\in(0,1)$ and
\begin{equation}
I_{\varepsilon,w}(Ru)\leq I_{\varepsilon,w}(u)\text{ for all }u\in
K(u_{0})\text{ and }w\in\{F(v):v\in L^{p}(0,T;V_{R})\} \label{1 approach}%
\end{equation}
or
\begin{equation}
I_{\varepsilon,w}(Ru)\leq I_{\varepsilon,w}(u)\text{ for all }u\in
K(u_{0})\text{ and }w=F(u). \label{2 approach}%
\end{equation}

\end{description}

Before commenting our assumptions let us state our main results.

\begin{theorem}
[Existence of invariant solutions]\label{abstract invariance principle}Let the
above assumptions be satisfied and $Ru_{0}=u_{0}$. Then, system
\emph{(\ref{nonconvex target equation})-(\ref{ic})} admits a strong solution
$u$ which is invariant under the action of $R$. Namely, $u=Ru.$
\end{theorem}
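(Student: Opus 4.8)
The plan is to exploit the variational structure recalled in the introduction: solutions to the target problem are obtained as limits (as $\varepsilon\to 0$) of fixed points $u_\varepsilon$ of the map $S$ in \eqref{esse}, and these fixed points minimize the WED functional $I_{\varepsilon,F(u_\varepsilon)}$ over $K(u_0)$. The strategy is to produce, for each $\varepsilon$, an $R$-invariant fixed point, and then to verify that $R$-invariance survives the limit. Concretely, I would first fix $\varepsilon>0$ and show that the fixed-point $u_\varepsilon$ can be chosen in $V_R$ (in the trajectory sense, $u_\varepsilon\in L^p(0,T;V_R)$). For the limit passage, I would recall from \cite{Ak-Me} that $u_\varepsilon\to u$ in a suitable topology (up to a subsequence), and use that $V_R$ is closed and convex, hence weakly closed in $V$ by (R1), to conclude $u\in L^p(0,T;V_R)$, i.e. $u=Ru$ a.e.

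The heart of the argument is the per-$\varepsilon$ step, and it splits according to which alternative of (R2) holds. Under the first alternative \eqref{1 approach}, I would set up a self-map on the invariant trajectories: restrict $S$ to $v\in L^p(0,T;V_R)$ and argue that the minimizer $u_{\varepsilon,v}$ of $I_{\varepsilon,F(v)}$ automatically lands in $L^p(0,T;V_R)$. Indeed, for such $v$ we have $w=F(v)\in\{F(v):v\in L^p(0,T;V_R)\}$, so \eqref{1 approach} gives $I_{\varepsilon,w}(R u_{\varepsilon,v})\le I_{\varepsilon,w}(u_{\varepsilon,v})$; since $R u_{\varepsilon,v}\in K(u_0)$ by (R1) together with $Ru_0=u_0$, minimality forces $R u_{\varepsilon,v}=u_{\varepsilon,v}$ by uniqueness of the minimizer (here strict convexity of $\varphi^1$ and of $\psi$ enter). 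Thus $S$ maps $L^p(0,T;V_R)$ into itself, and I would rerun the Schauder-type fixed-point argument of \cite{Ak-Me} on the closed convex set $L^p(0,T;V_R)$ to obtain an $R$-invariant fixed point $u_\varepsilon$.

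Under the second alternative \eqref{2 approach}, which only assumes the energy inequality along the diagonal $w=F(u)$, I cannot restrict the domain of $S$ a priori, so I would instead start from any fixed point $u_\varepsilon$ furnished by \cite{Ak-Me} and symmetrize it in one shot: since $u_\varepsilon=\argmin I_{\varepsilon,F(u_\varepsilon)}$, applying \eqref{2 approach} with $u=u_\varepsilon$ and $w=F(u_\varepsilon)$ yields $I_{\varepsilon,F(u_\varepsilon)}(Ru_\varepsilon)\le I_{\varepsilon,F(u_\varepsilon)}(u_\varepsilon)$, whence again $Ru_\varepsilon=u_\varepsilon$ by uniqueness of the minimizer. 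This alternative is cleaner but demands the stronger hypothesis that the energy does not increase under $R$ precisely at the self-consistent forcing.

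The main obstacle I anticipate is the first alternative: verifying that the abstract fixed-point scheme of \cite{Ak-Me} still closes when its domain is cut down to $L^p(0,T;V_R)$. I must confirm that $S$ is well-defined there (the minimizer exists and is unique, which is inherited from the unrestricted problem), that it is continuous and compact on this subset, and that $L^p(0,T;V_R)$ is a nonempty, closed, convex, bounded-invariant set to which the relevant fixed-point theorem applies; the hypothesis $\delta V_R\subset V_R$ for $\delta\in(0,1)$ in (R2) is presumably what guarantees the a priori bounds needed for compactness remain compatible with the invariant constraint. Closedness and convexity of $L^p(0,T;V_R)$ follow from (R1), and nonemptiness from $u_0\in V_R$ together with the constant trajectory, but the compactness/continuity bookkeeping is where the genuine care is required.
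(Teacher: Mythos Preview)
Your proposal is correct and follows essentially the same two-case argument as the paper: under \eqref{2 approach} symmetrize the given fixed point via uniqueness of the minimizer, under \eqref{1 approach} show $S$ is a self-map of $L^p(0,T;V_R)$ and run a fixed-point theorem there, then pass to the limit using closedness of $V_R$ and the strong convergence $u_\varepsilon\to u$ in $C([0,T];V)$. One clarification on the point you flagged as uncertain: the hypothesis $\delta V_R\subset V_R$ is not about a priori bounds or compactness per se, but is precisely the structural ingredient needed so that the Schaefer-type truncation $u\mapsto \frac{M}{|S(u)|}S(u)$ (used to reduce to Schauder on a bounded set) stays inside $L^p(0,T;V_R)$; the paper isolates this as a ``modified Schaefer theorem'' for a closed convex $L$ with $\delta L\subset L$.
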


The latter result can be extended to composition of maps. More precisely, we
prove the following.

\begin{corollary}
[Composition of maps]\label{coroll composition}Let the assumptions of Theorem
\emph{\ref{abstract invariance principle}} be satisfied, let $R_{1}$ satisfy
\emph{(R1)} and \emph{(\ref{1 approach})}, and $R_{2}$ satisfy
\emph{(R1)-(R2)}. Moreover, assume $R_{1}u_{0}=R_{2}u_{0}=u_{0}$. Then, there
exists a strong solution $u$ to system \emph{(\ref{nonconvex target equation}%
)-(\ref{ic})} invariant under the action of both $R_{1}\circ R_{2}$ and
$R_{2}\circ R_{1}$.
\end{corollary}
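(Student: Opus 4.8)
The plan is to prove the corollary by applying Theorem \ref{abstract invariance principle} to the composite maps $R_{1}\circ R_{2}$ and $R_{2}\circ R_{1}$. The strategy is to verify that each composite map satisfies hypotheses (R1) and (R2), and then to invoke the theorem directly. First I would check the initial-data compatibility: since $R_{1}u_{0}=u_{0}$ and $R_{2}u_{0}=u_{0}$, we immediately get $(R_{1}\circ R_{2})u_{0}=R_{1}(R_{2}u_{0})=R_{1}u_{0}=u_{0}$, and symmetrically $(R_{2}\circ R_{1})u_{0}=u_{0}$. So the compatibility condition $Ru_{0}=u_{0}$ of the theorem holds for both composites.

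Next I would verify (R1) for the composites. The fixed-point set of a composition is not simply the intersection of the individual fixed-point sets, so the key observation is that any $u$ fixed by both $R_{1}$ and $R_{2}$ is fixed by the composites; conversely one wants the composite's fixed-point set to be well-behaved. The cleanest route is to show $V_{R_{1}\circ R_{2}}$ contains $V_{R_{1}}\cap V_{R_{2}}$ and to argue that this intersection (nonempty since $u_{0}$ lies in it, convex and closed as an intersection of convex closed sets by (R1) for each $R_{i}$) is what the minimization will actually select. The $W^{1,p}$-preservation part of (R1) for the composite follows by composing the two preservation properties: if $u\in W^{1,p}(0,T;V)$ then $R_{2}u\in W^{1,p}(0,T;V)$ by (R1) for $R_{2}$, hence $R_{1}(R_{2}u)\in W^{1,p}(0,T;V)$ by (R1) for $R_{1}$.

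The main obstacle, and the heart of the argument, is establishing the energy-decrease condition (R2) for the composite. Here the asymmetry of the hypotheses is crucial: $R_{1}$ is assumed to satisfy the stronger condition \eqref{1 approach} (decrease for all $w$ in the $R$-invariant class), whereas $R_{2}$ need only satisfy the weaker \eqref{2 approach} (decrease along $w=F(u)$). The plan is to chain the two inequalities. Running the WED minimization should produce a minimizer $u_{\varepsilon}$ that is invariant under both $R_{1}$ and $R_{2}$ individually; I would apply \eqref{2 approach} for $R_{2}$ to pass from $u$ to $R_{2}u$, and then \eqref{1 approach} for $R_{1}$ — whose hypothesis covers the required weight $w=F(u_{\varepsilon})$ precisely because $u_{\varepsilon}\in L^{p}(0,T;V_{R})$ once invariance is secured — to pass from $R_{2}u$ to $R_{1}R_{2}u$, yielding $I_{\varepsilon,w}((R_{1}\circ R_{2})u)\le I_{\varepsilon,w}(R_{2}u)\le I_{\varepsilon,w}(u)$.

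The delicate point is the circular dependence: the validity of \eqref{1 approach} for the weight $F(u_{\varepsilon})$ requires knowing in advance that the relevant trajectory lands in $V_{R_{1}}$, while invariance under $R_{1}$ is itself what we are trying to deduce. I expect this to be resolved exactly as in the proof of Theorem \ref{abstract invariance principle}: the uniqueness of the WED minimizer $u_{\varepsilon,v}$ forces $R u_{\varepsilon,v}=u_{\varepsilon,v}$ whenever the energy does not increase under $R$, so the minimizer is automatically invariant, and the fixed-point $u_{\varepsilon}$ of the map $S$ inherits simultaneous invariance under $R_{1}$ and $R_{2}$. Having established invariance of $u_{\varepsilon}$ under both maps at the regularized level, the final step is to pass to the limit $\varepsilon\to0$; since $V_{R_{1}}$ and $V_{R_{2}}$ are closed and convex (hence weakly closed), the convergence $u_{\varepsilon}\to u$ established in Theorem \ref{abstract invariance principle} preserves membership in $V_{R_{1}}\cap V_{R_{2}}$, so the limiting strong solution $u$ satisfies $R_{1}u=u$ and $R_{2}u=u$, whence $(R_{1}\circ R_{2})u=(R_{2}\circ R_{1})u=u$.
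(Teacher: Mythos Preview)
Your announced plan—verify (R1)--(R2) for the composite $R_1\circ R_2$ and invoke Theorem~\ref{abstract invariance principle}—has a gap you flag but do not close: $V_{R_1\circ R_2}$ need not be convex or closed. A fixed point of the composite need not be fixed by either factor, so $V_{R_1\circ R_2}$ can strictly contain $V_{R_1}\cap V_{R_2}$ and be badly behaved; observing that the intersection sits inside it does not verify (R1) for the composite, which is what the theorem demands. Your chained inequality for (R2) then hits the circularity you correctly identify, and within the ``apply the theorem to the composite'' framework there is no exit.

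Your final paragraph lands on the right strategy—show $u_\varepsilon\in V_{R_1}\cap V_{R_2}$ at the regularized level and pass to the limit—and this is exactly the paper's route, but you leave the decisive step as ``I expect this to be resolved exactly as in the proof of Theorem~\ref{abstract invariance principle}'' without executing it. The paper never applies the theorem to a composite; it reopens the proof. Because $R_1$ satisfies \eqref{1 approach}, the map $S$ sends $L^p(0,T;V_{R_1})$ into itself, and the modified Schaefer argument (Theorem~\ref{generalized Schaefer}) run on that closed convex set yields a fixed point $u_\varepsilon\in L^p(0,T;V_{R_1})$. Only then does $R_2$ enter: $u_\varepsilon$ is the unique minimizer of $I_{\varepsilon,F(u_\varepsilon)}$, and (R2) for $R_2$ gives $I_{\varepsilon,F(u_\varepsilon)}(R_2u_\varepsilon)\le I_{\varepsilon,F(u_\varepsilon)}(u_\varepsilon)$, whence $R_2u_\varepsilon=u_\varepsilon$ by uniqueness. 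This asymmetric two-step order—first $R_1$ via the restricted fixed-point argument, then $R_2$ via minimizer uniqueness—is precisely what dissolves the circular dependence, and it explains why the hypotheses on $R_1$ and $R_2$ are asymmetric in the statement. The limit passage and the conclusion $R_1R_2u=R_2R_1u=u$ are then as you wrote.
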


We now comment briefly our abstract assumptions. Loosely speaking condition
(R1) ensures the compatibility of the map $R$ with the WED approach. More
precisely (R1), together with $Ru_{0}=u_{0}$ is sufficient to guarantee the
$R$-invariance of the domain of the WED functional, i.e., $RK(u_{0})\subset
K(u_{0})$. Assumption (R2) is the crucial assumption; it allows us to prove
that the map $S$ defined by (\ref{esse}) has a $R$-invariant fixed point
$u_{\varepsilon}$, i.e., $Ru_{\varepsilon}=u_{\varepsilon}$. Having this, by
using again (R1), we can easily pass to the limit $\varepsilon\to0$ and prove
$Ru=u= \lim_{\varepsilon\to0}u_{\varepsilon}$. Let us note that in concrete
applications (see Section 3) we check (R2) by proving the following.

\begin{description}
\item[(R2.1)] $\varphi^{1}(Ru)\leq\varphi^{1}(u)$ for all $u\in V$.

\item[(R2.2)] $\int_{0}^{T}\mathrm{e}^{-t/\varepsilon}\psi\left(
\frac{\mathrm{d}}{\mathrm{d}t}\left(  Ru\right)  \right)  \mathrm{d}t\leq
\int_{0}^{T}\mathrm{e}^{-t/\varepsilon}\psi\left(  \frac{\mathrm{d}%
}{\mathrm{d}t}u\right)  \mathrm{d}t$ for all $u\in W^{1,p}(0,T;V)$.

\item[(R2.3)] Either%
\begin{equation}
\left\langle w,Ru\right\rangle _{V}\geq\left\langle w,u\right\rangle
_{V}\text{ for all }w=F(v)\text{, }v\in V_{R}\text{, }u\in V\text{,}
\label{1 approach 2}%
\end{equation}
or
\begin{equation}
\left\langle F(u),Ru\right\rangle _{V}\geq\left\langle F(u),u\right\rangle
_{V}\text{ for all }u\in V. \label{2 approach 2}%
\end{equation}

\end{description}

\begin{lemma}
[(R2.1)-(R2.3) imply (R2)]Let \emph{(R1)} be satisfied. Then,

\begin{description}
\item[i)] \emph{(R2.1)}, \emph{(R2.2)}, and \emph{(\ref{1 approach 2})} imply
\emph{(\ref{1 approach})},

\item[ii)] \emph{(R2.1)}, \emph{(R2.2)}, and \emph{(\ref{2 approach 2})} imply
\emph{(\ref{2 approach}).}
\end{description}

In particular, \emph{(R2.1)-(R2.3)} imply \emph{(R2)}.
\end{lemma}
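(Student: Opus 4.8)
The plan is to prove the lemma by directly comparing the two sides of the WED-functional inequalities, decomposing the functional $I_{\varepsilon,w}$ into its three natural summands and bounding each separately using the corresponding hypothesis. Recall that for $u\in K(u_0)$ we have
\[
I_{\varepsilon,w}(u)=\int_0^T \mathrm{e}^{-t/\varepsilon}\Bigl(\varepsilon\psi(u')+\varphi^1(u)-\langle w,u\rangle_V\Bigr)\,\mathrm{d}t,
\]
so that $I_{\varepsilon,w}(Ru)-I_{\varepsilon,w}(u)$ splits, after inserting $Ru$ in place of $u$, into a dissipation term, a potential term, and a linear term. The first observation I would record is that under (R1), if $u\in K(u_0)$ then $Ru\in W^{1,p}(0,T;V)$, and since $Ru_0=u_0$ (which is in force throughout via the hypotheses of Theorem~\ref{abstract invariance principle}) we have $(Ru)(0)=u_0$, so $Ru\in K(u_0)$ and both sides of the claimed inequalities are well-defined and the comparison makes sense.

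First I would treat the dissipation term $\varepsilon\int_0^T \mathrm{e}^{-t/\varepsilon}\psi\bigl(\tfrac{\mathrm{d}}{\mathrm{d}t}(Ru)\bigr)\,\mathrm{d}t$. This is bounded above by the same expression with $u$ in place of $Ru$ precisely by (R2.2), which is the statement that the weighted dissipation integral is nonincreasing under $R$. Next I would treat the potential term $\int_0^T \mathrm{e}^{-t/\varepsilon}\varphi^1(Ru)\,\mathrm{d}t$: since (R2.1) gives $\varphi^1(Ru(t))\le\varphi^1(u(t))$ pointwise in $t$, and the weight $\mathrm{e}^{-t/\varepsilon}$ is nonnegative, integrating preserves the inequality and bounds this term by the corresponding term for $u$. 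These two steps are identical in both cases i) and ii), so the only place the two cases diverge is the linear term.

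For the linear term I would pass from the two variants of (R2.3) to the two variants of (R2). In case i), the hypothesis is (\ref{1 approach 2}): $\langle w,Ru\rangle_V\ge\langle w,u\rangle_V$ for all $w=F(v)$ with $v\in V_R$ and all $u\in V$. Applied pointwise in $t$ to a trajectory $u$ and to a weight of the form $w=F(v)$ with $v\in L^p(0,T;V_R)$ (so that $v(t)\in V_R$ for a.e.\ $t$), and again using $\mathrm{e}^{-t/\varepsilon}\ge0$, this yields $-\int_0^T\mathrm{e}^{-t/\varepsilon}\langle w,Ru\rangle_V\,\mathrm{d}t\le-\int_0^T\mathrm{e}^{-t/\varepsilon}\langle w,u\rangle_V\,\mathrm{d}t$, which is exactly the linear contribution needed to conclude (\ref{1 approach}) after summing the three estimates. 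Case ii) is the same argument with $w=F(u)$ and the pointwise inequality (\ref{2 approach 2}) in place of (\ref{1 approach 2}), giving (\ref{2 approach}). The final sentence then follows since (R2) is by definition the disjunction "(\ref{1 approach}) or (\ref{2 approach})", so establishing either one suffices.

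I do not expect a serious obstacle here, as the argument is a term-by-term comparison; the one point requiring a little care is the measurability and integrability bookkeeping needed to legitimately apply the pointwise inequalities (R2.1) and (R2.3) under the integral sign. Specifically, I would verify that for $u\in K(u_0)$ the compositions $t\mapsto\varphi^1(Ru(t))$ and $t\mapsto\langle w(t),Ru(t)\rangle_V$ are measurable and integrable against the weight, which is guaranteed by (R1) (ensuring $Ru\in W^{1,p}(0,T;V)$) together with the growth and continuity assumptions on $\varphi^1$, $\varphi^2$, and $f$ collected at the start of Section~\ref{section assumptions}. Once integrability is in hand, the nonnegativity of $\mathrm{e}^{-t/\varepsilon}$ is the only structural fact used to lift the pointwise inequalities to the integrated ones.
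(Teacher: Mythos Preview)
Your proposal is correct and follows essentially the same route as the paper: decompose $I_{\varepsilon,w}$ into its dissipation, potential, and linear parts and apply (R2.2), (R2.1), and (R2.3) term by term. The one detail the paper makes explicit and you only gesture at is the domain constraint: since $I_{\varepsilon,w}$ is $+\infty$ off $K(u_0)\cap L^m(0,T;X)$, you must check that $Ru\in L^m(0,T;X)$ whenever $u$ is, and this follows from the coercivity in $\varphi^1\in\Theta_m(X)$ combined with (R2.1), not from (R1) alone.
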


\begin{proof}
For all $u\in L^{p}(0,T;V)$ and $w\in L^{p^{\prime}}(0,T;V^{\ast})$, decompose
$I_{\varepsilon,w}(u)=I_{\varepsilon}^{1}(u)+I_{\varepsilon,w}^{2}(u)$, where
\begin{align*}
I_{\varepsilon}^{1}(u)  &  =\left\{
\begin{array}
[c]{cc}%
\displaystyle{\int_{0}^{T}\mathrm{e}^{-t/\varepsilon}\left(  \varepsilon
\psi(u^{\prime})+\varphi^{1}(u)\right)  \mathrm{d}t} & \text{if }u\in
K(u_{0})\cap L^{m}(0,T;X)\text{,}\\
+\infty & \text{else,}%
\end{array}
\right. \\
I_{\varepsilon,w}^{2}(u)  &  =-\int_{0}^{T}\mathrm{e}^{-t/\varepsilon
}\left\langle w,u\right\rangle _{V}\mathrm{d}t.
\end{align*}
Note that, as a consequence of $\varphi^{1}\in\Theta_{m}(X)$ and of (R2.1), we
have that $Ru\in X$ for all $u\in X$. This fact, (R2.1), and (R2.2) imply that
$I_{\varepsilon}^{1}(Ru)\leq I_{\varepsilon}^{1}(u)$ for all $u\in K(u_{0})$.
Moreover, inequality (\ref{1 approach 2}) ensures that $I_{\varepsilon,w}%
^{2}(Ru)\leq I_{\varepsilon,w}^{2}(u)$ for all $u\in K(u_{0})$ and
$w\in\{F(v):v\in L^{p}(0,T;V_{R})\}$, which yields inequality
(\ref{1 approach}), and inequality (\ref{2 approach 2}) implies
$I_{\varepsilon,w}^{2}(Ru)\leq I_{\varepsilon,w}^{2}(u)$ for all $u\in
K(u_{0})$ and $w=F(u)$, i.e. inequality (\ref{2 approach}).
\end{proof}

\subsubsection{Preliminary results for the proof of Theorem
\ref{abstract invariance principle}}

In order to prove Theorem \ref{abstract invariance principle}, we first
collect some preliminary results. We record here a slightly modified version of the Schaefer fixed-point
Theorem, which will be used in the proof of Theorem
\ref{abstract invariance principle}.

\begin{theorem}
[Modified Schaefer's fixed-point Theorem]\label{generalized Schaefer}Let $B$
be a reflexive Banach space and $L\subset B$ be nonempty, convex, and closed.
Assume $\delta L\subset L$ for every $\delta\in(0,1).$ Let $S:B\rightarrow B$
be continuous, compact, and such that $S(L)\subset L$. Moreover, let the set
$\{u\in B:\alpha S(u)=u$ for some $\alpha\in\lbrack0,1]\}$ be bounded. Then,
$S$ has a fixed point in $L$.
\end{theorem}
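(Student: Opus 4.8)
The plan is to reduce the Modified Schaefer's fixed-point Theorem to the classical Schaefer fixed-point theorem (equivalently, the Leray--Schauder alternative) by building a suitable retraction onto the closed convex set $L$. First I would recall the classical statement: if $B$ is a Banach space and $T:B\to B$ is continuous and compact such that the set $\{u\in B:u=\alpha T(u)\text{ for some }\alpha\in[0,1]\}$ is bounded, then $T$ has a fixed point in $B$. Our hypotheses give us all of this for $S$ \emph{except} that we want the fixed point to lie in the prescribed set $L$, and we only know $S(L)\subset L$ rather than $S(B)\subset L$.

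The key device is to compose $S$ with a retraction $P:B\to L$. Since $B$ is reflexive and $L$ is nonempty, closed, and convex, one can fix a point and use a nearest-point-type or a general continuous retraction onto $L$; the cleanest route, to avoid assuming $B$ is a Hilbert space, is to invoke Dugundji's extension theorem, which guarantees a continuous retraction $P:B\to L$ onto the closed convex (hence, being a convex subset of a normed space, an absolute retract) set $L$. I would then set $\tilde S:=S\circ P:B\to B$. Because $P$ is continuous and maps into $L$, and $S$ is continuous and compact, the composition $\tilde S$ is continuous and compact, and moreover $\tilde S(B)=S(P(B))\subset S(L)\subset L$. Applying classical Schaefer to $\tilde S$ — once I verify the boundedness of its Leray--Schauder set — produces a fixed point $u^*=\tilde S(u^*)\in L$. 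Since $u^*\in L$ and $P$ restricts to the identity on $L$, we get $P(u^*)=u^*$, whence $u^*=S(P(u^*))=S(u^*)$, i.e. $u^*$ is a genuine fixed point of $S$ lying in $L$.

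The step I expect to be the main obstacle is transferring the boundedness hypothesis: classical Schaefer requires the set $\{u:u=\alpha\,\tilde S(u),\ \alpha\in[0,1]\}$ to be bounded, but we are only given boundedness of $\{u:u=\alpha\,S(u),\ \alpha\in[0,1]\}$. Here the condition $\delta L\subset L$ for all $\delta\in(0,1)$ becomes essential. If $u=\alpha\,\tilde S(u)=\alpha\,S(Pu)$ with $\alpha\in[0,1]$, I would first argue that $u\in L$: indeed $S(Pu)\in L$, and since $\alpha\in[0,1]$ the assumption $\delta L\subset L$ (together with $0\in L$ or the $\alpha=1$ case handled directly) forces $u=\alpha\,S(Pu)\in L$. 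Once $u\in L$ we have $Pu=u$, so the relation collapses to $u=\alpha\,S(u)$, placing $u$ in the set that we already know to be bounded. One must be slightly careful with the endpoints $\alpha=0$ (giving $u=0$, which lies in $L$ by $\delta L\subset L$ as $\delta\downarrow 0$ together with closedness) and $\alpha=1$, but these are routine.

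Finally I would collect the pieces: $\tilde S$ is a continuous compact self-map of $B$ whose associated Leray--Schauder set is bounded, so classical Schaefer yields a fixed point $u^*\in L$ of $\tilde S$, which by the retraction identity is a fixed point of $S$ in $L$. The only genuinely nontrivial ingredient beyond bookkeeping is the existence of the continuous retraction $P$ onto $L$, for which I rely on Dugundji's theorem (valid since $L$, as a convex subset of a normed linear space, is an absolute retract); everything else is a direct verification that composition preserves continuity and compactness and that the $\delta L\subset L$ hypothesis reduces the new boundedness requirement to the given one.
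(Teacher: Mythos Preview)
Your argument is essentially correct and takes a genuinely different route from the paper's. The paper does not reduce to classical Schaefer as a black box; instead it reruns the standard Schaefer proof \emph{inside} $L$: one picks $M$ bounding the Leray--Schauder set, defines the radial truncation
\[
T(u)=\begin{cases}S(u)&\text{if }|S(u)|_B\le M,\\[2pt] \dfrac{M}{|S(u)|_B}\,S(u)&\text{if }|S(u)|_B>M,\end{cases}
\]
observes that the hypothesis $\delta L\subset L$ makes $T$ a self-map of $B_M(0)\cap L$, and then applies Schauder on the closed convex hull of $T(B_M(0)\cap L)$. So in the paper $\delta L\subset L$ is used to keep the \emph{truncation} inside $L$, whereas you use it to collapse the Leray--Schauder set of $\tilde S$ back to that of $S$. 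Your approach is arguably more modular (it invokes Schaefer off the shelf), at the cost of importing Dugundji's theorem; the paper's approach is more elementary but requires reopening the proof of Schaefer.

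One point needs shoring up. You assert that $\tilde S=S\circ P$ is compact ``because $P$ is continuous \ldots\ and $S$ is continuous and compact''. Continuity of $P$ alone does not suffice: compactness of $S$ only gives relative compactness of $S(P(A))$ when $P(A)$ is \emph{bounded}, and a general continuous retraction onto an unbounded convex set need not send bounded sets to bounded sets. The fix is available in your setting: since $0\in L$ (which you already derived from $\delta L\subset L$ and closedness), the Dugundji extension of $\mathrm{id}_L$ can be chosen so that $P(x)$ lies in the convex hull of points $a\in L$ with $d(a,x)\le C\,d(x,L)\le C|x|$, hence $|P(x)|\le(1+C)|x|$ and $P$ is bounded-on-bounded. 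You should state this explicitly rather than leave it implicit. With that addition, the proof goes through.
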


\begin{proof}
Our proof is a minor modification of the proof of the Schaefer fixed-point
Theorem presented in \cite[Thm. 4, Ch. 9]{Ev}. Choose $M$ so large that
$|u|_{B}<M$ for every $u\in\{u\in B:\alpha S(u)=u$ for some $\alpha\in
\lbrack0,1]\}$. Then, define
\[
T(u)=\left\{
\begin{array}
[c]{cc}%
S(u) & \text{if }|S(u)|_{B}\leq M\text{,}\\
\frac{S(u)M}{|S(u)|_{B}} & \text{if }|S(u)|_{B}>M\text{.}%
\end{array}
\right.
\]
Observe that $T:B_{M}(0)\cap L\rightarrow B_{M}(0)\cap L$, where
$B_{M}(0)=\{b\in B:|b|_{B}\leq M\}$. Define $\tilde{K}$ to be the convex hull
of $T(B_{M}(0)\cap L)$ and $K$ the closure of $\tilde{K}$.\ Note that, as $L$
is closed and convex, we have that $K\subseteq B_{M}(0)\cap L$. Moreover,
$T:K\rightarrow K$ is continuous and $T(K)$ is relatively compact in $K$.
Hence, we can apply the Schauder fixed-point Theorem and prove the existence
of $u\in K\subset L$ such that $T(u)=u$. We now show that $u$ is a fixed point
for $S$. Suppose by contradiction that $S(u)\neq u$. Then, $|S(u)|_{B}>M$ and
$u=\alpha S\left(  u\right)  $ for $\alpha=\frac{M}{|S(u)|_{B}}<1$. Hence,
$|u|_{B}<M$. As $u$ is a fixed point for $T$, we conclude that $|T(u)|_{B}%
=|u|_{B}<M=\left\vert \frac{MS(u)}{|S(u)|_{B}}\right\vert _{B}=|T(u)|_{B}$, a contradiction.
\end{proof}

We shall now summarize the WED approach to system
(\ref{nonconvex target equation})-(\ref{ic}) studied in \cite{Ak-Me}.

\begin{proposition}
[WED approach I]\label{wed approach nonpotential}Let the assumptions of
Theorem \emph{\ref{abstract invariance principle}} be satisfied. Then, the
functional $I_{\varepsilon,w}$ defined by%
\begin{equation}
I_{\varepsilon,w}(u)=\left\{
\begin{array}
[c]{cc}%
\displaystyle{\int_{0}^{T}\mathrm{e}^{-t/\varepsilon}\left(  \varepsilon
\psi(u^{\prime})+\varphi^{1}(u)-\left\langle w,u\right\rangle _{V}\right)
\mathrm{d}t} & \text{if }u\in K(u_{0})\cap L^{m}(0,T;X)\text{,}\\
+\infty & \text{else,}%
\end{array}
\right.  \label{wed functional dn}%
\end{equation}
admits an unique minimizer $u_{\varepsilon,w}$ over the set $K(u_{0})=\{u\in
W^{1,p}\left(  0,T;V\right)  :u(0)=u_{0}\}$ for every $w\in L^{p^{\prime}%
}(0,T;V^{\ast})$ and $\varepsilon>0$ small enough.

Define the map $S:L^{p}(0,T;V)\rightarrow L^{p}(0,T;V)$ by
\begin{equation}
S:v\longmapsto w=F(v):=f(v)+\mathrm{d}_{V}\varphi^{2}(v)\longmapsto
u_{\varepsilon,v}, \label{def S}%
\end{equation}
where
\[
u_{\varepsilon,v}=\underset{\tilde{u}\in K(u_{0})}{\argmin}\text{
}I_{\varepsilon,w}(\tilde{u}).
\]
Then, $S$ is continuous and compact, the set $\{v\in L^{p}(0,T;V):\alpha
S(v)=v$ for $\alpha\in\lbrack0,1]\}$ is bounded in $L^{p}(0,T;V)$, and $S$ has
a fixed\ point $u_{\varepsilon}$ for every $\varepsilon>0$ small enough.
Moreover, $u_{\varepsilon}\in C([0,T];V)$ and fulfills
\begin{equation}
u_{\varepsilon}=\underset{\tilde{u}\in K(u_{0})}{\argmin}\text{ }%
I_{\varepsilon,F(u_{\varepsilon})}(\tilde{u})=\underset{\tilde{u}\in K(u_{0}%
)}{\argmin}\text{ }\int_{0}^{T}\mathrm{e}^{-t/\varepsilon}(\varepsilon
\psi(\tilde{u}^{\prime})+\varphi^{1}(\tilde{u})-\left\langle F(u_{\varepsilon
}),\tilde{u}\right\rangle _{V})\mathrm{d}t. \label{Formula}%
\end{equation}
Finally, there exists a sequence $\varepsilon_{n}\rightarrow0$ such that
\[
u_{\varepsilon_{n}}\rightarrow u\text{ strongly in }C\left(  [0,T];V\right)
,
\]
where $u$ is a strong solution of \emph{(\ref{nonconvex target equation}%
)-(\ref{ic}).}
\end{proposition}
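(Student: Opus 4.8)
The plan is to prove the assertions in the order stated, treating the minimization of $I_{\varepsilon,w}$ first, then the map $S$, and finally the vanishing-viscosity limit $\varepsilon\to0$; since the statement recapitulates \cite{Ak-Me}, I would chiefly verify that the present hypotheses furnish the estimates required there. For existence and uniqueness of $u_{\varepsilon,w}$ I would apply the Direct Method (in the approximate form of \cite{Ak-Me}) on $K(u_0)\cap L^m(0,T;X)$: coercivity in $W^{1,p}(0,T;V)\cap L^m(0,T;X)$ follows from the growth encoded in $\psi\in\Theta_p(V)$ and $\varphi^1\in\Theta_m(X)$, the linear term $-\langle w,u\rangle_V$ being absorbed by Young's inequality once $\varepsilon$ is small (note that $e^{-t/\varepsilon}$ stays between positive constants on $[0,T]$); weak lower semicontinuity comes from convexity and lower semicontinuity of $\psi,\varphi^1$, and uniqueness from the strict convexity of $\varphi^1$.

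Next I would establish the properties of $S$. Single-valuedness is immediate from Fr\'echet differentiability of $\varphi^2$, so that $F=\mathrm{d}_V\varphi^2+f$ is well defined, together with uniqueness of the minimizer. Compactness rests on the compact embedding $X\hookrightarrow V$: minimizers are bounded in $W^{1,p}(0,T;V)\cap L^m(0,T;X)$ uniformly for $w$ ranging in bounded sets, and the Aubin--Lions lemma upgrades this to relative compactness in $L^p(0,T;V)$, so $S$ maps bounded sets to relatively compact ones. For continuity I would argue by stability of minimizers: if $v_n\to v$ in $L^p(0,T;V)$ then $F(v_n)\to F(v)$ in $L^{p'}(0,T;V^*)$ by continuity of $f$ and of $\mathrm{d}_V\varphi^2$, and a lower-semicontinuity argument together with uniqueness of the limit forces $u_{\varepsilon,v_n}\to u_{\varepsilon,v}$.

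The a priori bound on $\{v:\alpha S(v)=v,\ \alpha\in[0,1]\}$ is where the structural hypotheses (\ref{hp phi2})--(\ref{hp f}) enter decisively. Any such $v$ satisfies $v=\alpha u_{\varepsilon,v}$ with $u_{\varepsilon,v}$ minimizing $I_{\varepsilon,F(v)}$, so it suffices to bound $u_{\varepsilon,v}$; testing its Euler--Lagrange equation I would derive an energy estimate in which the $\varphi^2$-contribution is absorbed using $k<1$ in (\ref{hp phi2}) and the $f$-contribution is controlled by its $p'$-growth (\ref{hp f}) via Young's inequality, yielding a bound uniform in $\alpha$. With $S$ continuous and compact and this set bounded, the Modified Schaefer Theorem (Theorem \ref{generalized Schaefer}), applied with $B=L^p(0,T;V)$ (reflexive since $V$ is uniformly convex and $p\in(1,\infty)$) and $L=B$, produces a fixed point $u_\varepsilon$; Formula (\ref{Formula}) is then the fixed-point identity, and $u_\varepsilon\in C([0,T];V)$ follows from $W^{1,p}(0,T;V)\hookrightarrow C([0,T];V)$.

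Finally, for $\varepsilon\to0$ I would first note that the Euler--Lagrange system of the minimizer is exactly the elliptic-in-time regularization (\ref{nonconvex target equation eps})--(\ref{ic eps}). Deriving $\varepsilon$-uniform bounds on $u_\varepsilon$ in $W^{1,p}(0,T;V)\cap L^m(0,T;X)$ and on the selections $\eta^1,\eta^2,\mathrm{d}_V\psi(u_\varepsilon')$ in the relevant dual spaces (with the regularizing term $-\varepsilon(\mathrm{d}_V\psi(u_\varepsilon'))'$ shown to vanish), I would extract a subsequence converging weakly in these spaces and, through Aubin--Lions and the compact embedding $X\hookrightarrow V$, strongly in $C([0,T];V)$. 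The hard part will be identifying the limit $u$ as a strong solution: passing to the limit in the inclusion $\eta^1\in\partial_V\varphi^1(u_\varepsilon)$ requires a Minty-type monotonicity argument exploiting the strong convergence of $u_\varepsilon$ and maximal monotonicity of $\partial_V\varphi^1$, while the continuity of $f:L^p(0,T;V)\to L^{p'}(0,T;V^*)$ disposes of the nonpotential term. This limit passage, not any single earlier estimate, is the genuine obstacle, and I would rely on the detailed computations of \cite{Ak-Me} to control the $\varepsilon$-regularizing terms throughout.
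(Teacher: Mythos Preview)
Your proposal is a reasonable and essentially correct sketch of the argument, but you should be aware that the paper does not actually prove this proposition at all: it is stated explicitly as a summary of results established in \cite{Ak-Me} (see the sentence preceding the proposition, ``We shall now summarize the WED approach to system (\ref{nonconvex target equation})--(\ref{ic}) studied in \cite{Ak-Me}''), and no proof is given in the present paper. So there is no ``paper's own proof'' to compare against beyond the citation.

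That said, your outline matches the strategy described informally in the introduction and is consistent with what one expects the proof in \cite{Ak-Me} to contain: Direct Method for the minimization, Schaefer-type fixed point for $S$ (continuity, compactness via Aubin--Lions, and the a priori bound using (\ref{hp phi2})--(\ref{hp f})), identification of the Euler--Lagrange system with (\ref{nonconvex target equation eps})--(\ref{ic eps}), and $\varepsilon$-uniform estimates plus a Minty-type argument for the limit. One small inaccuracy: the weight $e^{-t/\varepsilon}$ does \emph{not} stay between positive constants uniformly in $\varepsilon$ (its lower bound $e^{-T/\varepsilon}$ degenerates), so the $\varepsilon$-uniform estimates in the final step cannot simply ignore the weight; this is precisely where the detailed computations of \cite{Ak-Me} that you defer to are genuinely needed. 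Otherwise your plan is sound.
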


\subsubsection{Proof of Theorem \ref{abstract invariance principle} and
Corollary \ref{coroll composition}}

Let us first prove Theorem \ref{abstract invariance principle}. We start by
checking that the map $S:L^{p}(0,T;V)\rightarrow L^{p}(0,T;V)$ defined as in
(\ref{def S}) has a fixed point $u_{\varepsilon}\in C([0,T];V)$ such that
\begin{equation}
Ru_{\varepsilon}=u_{\varepsilon}\text{ in }[0,T]\text{. }
\label{invariance eps}%
\end{equation}
In case condition (\ref{2 approach}) is satisfied we proceed as follows. Every fixed
point $u_{\varepsilon}$ of $S$ fulfills (see Theorem
\ref{wed approach nonpotential})%
\[
u_{\varepsilon}=\arg\min_{\tilde{u}\in K(u_{0})}I_{\varepsilon
,F(u_{\varepsilon})}(\tilde{u})\text{,}%
\]
i.e., $u_{\varepsilon}$ is the unique minimizer of $I_{\varepsilon
,F(u_{\varepsilon})}$ over $K(u_{0})$. As a consequence of (R1), we have that
$Ru_{\varepsilon}\in K(u_{0})$, and, thanks to assumption (\ref{2 approach}),%
\[
I_{\varepsilon,F(u_{\varepsilon})}(Ru_{\varepsilon})\leq I_{\varepsilon
,F(u_{\varepsilon})}(u_{\varepsilon})\text{.}%
\]
By uniqueness of the minimizer, $Ru_{\varepsilon}=u_{\varepsilon}$.

In case condition (\ref{1 approach}) holds true, we show that $S$ maps the set
$L^{p}(0,T;V_{R})$ into itself and we than infer the existence of a fixed
point $u_{\varepsilon}\in L^{p}(0,T;V_{R})$ of $S$. Let $v\in L^{p}%
(0,T;V_{R})$ and $w=F(v)$. Using assumptions (R1) and (\ref{1 approach}) and arguing as
above, we deduce that the unique minimizer $u_{\varepsilon,v}$ of
$I_{\varepsilon,w}$ satisfies $Ru_{\varepsilon,v}=u_{\varepsilon,v}$. This
yields $S:L^{p}(0,T;V_{R})\rightarrow L^{p}(0,T;V_{R})$. By virtue of Theorem
\ref{wed approach nonpotential}, the map $S:L^{p}(0,T;V_{R})\rightarrow
L^{p}(0,T;V_{R})$ is continuous, compact, and such that the set $\{v\in
L^{p}(0,T;V_{R}):\alpha S(v)=v$ for $\alpha\in\lbrack0,1]\}$ is bounded.
Moreover, we have that $\delta L^{p}(0,T;V_{R}%
)\subset L^{p}(0,T;V_{R})$ for all $\delta\in(0,1)$. Therefore, by applying
the Schaefer fixed-point Theorem \ref{generalized Schaefer}, we conclude that
$S$ has a fixed point $u_{\varepsilon}\in L^{p}(0,T;V_{R})$. Moreover,
$u_{\varepsilon}\in C\left(  [0,T];V_{R}\right)  $. In particular, it fulfills
(\ref{invariance eps}).

Thanks to Theorem \ref{wed approach nonpotential}, we have (after extraction
of a not relabeled subsequence) that $u_{\varepsilon}\rightarrow u$ strongly
in $C\left(  [0,T];V\right)$, where $u$ solves
(\ref{nonconvex target equation})-(\ref{ic}). As $V_{R}$ is closed in $V$,
from (\ref{invariance eps}) we deduce that%
\[
Ru(t)=u(t)\text{ for all }t\in\lbrack0,T]\text{.}%
\]
This proves Theorem \ref{abstract invariance principle}.

We now move to Corollary \ref{coroll composition}. Assume that $R_{1}$ and
$R_{2}$ satisfy (\ref{1 approach}). Then, by restricting the map $S$ to
$L^{p}(0,T;V_{R_{1}}\cap V_{R_{2}})$ and arguing as above, we can easily
deduce that $S:L^{p}(0,T;V_{R_{1}}\cap V_{R_{2}})\rightarrow L^{p}%
(0,T;V_{R_{1}}\cap V_{R_{2}})$ and that it has a fixed point $u_{\varepsilon
}\in L^{p}(0,T;V_{R_{1}}\cap V_{R_{2}})$.

In case $R_{1}$ satisfies condition (\ref{1 approach}) and $R_{2}$ satisfies
condition (\ref{2 approach}), we still have that $S:L^{p}(0,T,V_{R_{1}%
})\rightarrow L^{p}(0,T,V_{R_{1}})$ and it has a fixed point $u_{\varepsilon
}\in L^{p}(0,T,V_{R_{1}})$. Moreover, as a consequence of assumption
(\ref{2 approach}) for $R_{2}$%
\[
I_{\varepsilon,F(u_{\varepsilon})}(R_{2}u_{\varepsilon})\leq I_{\varepsilon
,F(u_{\varepsilon})}(u_{\varepsilon})\text{.}%
\]
Thus, $R_{2}u_{\varepsilon}=u_{\varepsilon}$ in $[0,T]$, which yields
$u_{\varepsilon}\in L^{p}(0,T;V_{R_{1}}\cap V_{R_{2}})$.

By applying Theorem \ref{wed approach nonpotential}, we can pass to the limit
as $\varepsilon\rightarrow0$ and obtain that there exists $u$ solution of
system (\ref{nonconvex target equation})-(\ref{ic}) such that
\[
u\in C([0,T];V_{R_{1}}\cap V_{R_{2}})\text{.}%
\]
In particular,%
\[
R_{2}R_{1}u=R_{1}R_{2}u=u\text{ in }[0,T]\text{.}%
\]
This concludes the proof of Corollary \ref{coroll composition}.

Note that, as a byproduct of Theorem \ref{abstract comparison principle} and
Corollary \ref{coroll composition}, we have existence of $R$-invariant
solutions to the elliptic-in-time regularization
(\ref{nonconvex target equation eps})-(\ref{ic eps}) of
(\ref{nonconvex target equation})-(\ref{ic}).

\subsection{Main result 2: comparison principles\label{section comparison}}

We now state a comparison principle for doubly-nonlinear systems. Here, we
assume $F$\ to have a potential structure, namely $F(u)=g+\partial_{V}%
\varphi^{2}(u)$: indeed comparison principles cannot be expected for genuinely
nonpotential terms. Counterexamples can be found already in ODE systems. On
the other hand, we allow for possibly noncontinuous/nondifferentiable
functionals $\varphi^{2}$ as our argument does not require uniqueness of the
minimizer of the WED functional.

Let $\psi$ and $\varphi^{1}$ satisfy assumptions of Theorem
\ref{abstract invariance principle}. Let $\varphi^{2}:V\rightarrow
\lbrack0,\infty)$ be proper, convex, l.s.c., and satisfy conditions
(\ref{hp phi2}) and (\ref{hp phi 2.2}). Assume additionally that the reaction
term $f$ does not depend on $u$, namely
\[
f(u)=g\in L^{p^{\prime}}(0,T;V^{\ast})\text{ for all }u\in L^{p}(0,T;V).
\]
In order to avoid unnecessary complications deriving by the definition of an
abstract concept of order in Banach spaces, we restrict now our attention to
the case of problems (\ref{nonconvex target equation})-(\ref{ic}) whose
solutions can be represented as real-valued functions. More precisely, we
assume that$\ X$ and $V$ are Banach spaces composed by real-valued functions
satisfying assumptions of Theorem \ref{abstract invariance principle} and such
that
\begin{equation}
\{(a,b)\in V\times V:a\leq b\}\text{ is closed in }V\times V
\label{clos cond 2}%
\end{equation}
and
\begin{align}
u\wedge v,u\vee v \in W^{1,p}(0,T;V)\text{ for all }u,v\in W^{1,p}(0,T;V).
\label{min w1p}%
\end{align}
Let us remark that the above assumptions are satisfied by the Lebesgue spaces
$L^{q}(\Omega)$, Sobolev spaces $W^{1,q}(\Omega)$, and fractional Sobolev
spaces $W^{s,r}(\Omega)$, for all $q\in\lbrack1,\infty]$, $s\in(0,1)$,
$r\in\lbrack1,\infty)$ and $\Omega=\mathbb{R}^{d}$ or measurable, bounded, and
with Lipschitz boundary. Define the WED functional
\begin{equation}
I_{\varepsilon}(\tilde{u})=\left\{
\begin{array}
[c]{cc}%
\begin{array}
[c]{cc}%
\displaystyle{\int_{0}^{T}\mathrm{e}^{-t/\varepsilon}\left(  \varepsilon
\psi(\tilde{u}^{\prime})+\varphi^{1}(\tilde{u})-\varphi^{2}(\tilde
{u})-\left\langle g,\tilde{u}\right\rangle _{V}\right)  \mathrm{d}t} & \\
\text{ } &
\end{array}
&
\begin{array}
[c]{c}%
\text{if }\tilde{u}\in K(u_{0})\\
\cap L^{m}(0,T;X)\text{,}%
\end{array}
\\
+\infty & \text{else,}%
\end{array}
\right.  \label{wed comparison}%
\end{equation}
where $K(u_{0})=\{\tilde{u}\in W^{1,p}(0,T;V):\tilde{u}(0)=u_{0}\}$, and
assume that for all $u_{0},v_{0}\in D(\varphi^{1})$, such that $u_{0} \leq
v_{0}$%
\begin{equation}
I_{\varepsilon}(u\wedge v)+I_{\varepsilon}(u\vee v)\leq I_{\varepsilon
}(u)+I_{\varepsilon}(v)\text{ for all }u\in K(u_{0})\text{, }v\in
K(v_{0})\text{.} \label{key assumption comparison}%
\end{equation}
Before stating the main result of this section let us remark that assumption
(\ref{key assumption comparison}) is crucial as it allows us to prove a
comparison principle for minimizers of the WED functional by applying Lemma
\ref{abstract comparison principle} below.

The main result of this section states a comparison principle for problem
(\ref{nonconvex target equation})-(\ref{ic}).

\begin{theorem}
[Comparison principle]\label{comp princ 2}Let $u_{0},v_{0}\in X$ be such that
$u_{0}\leq v_{0}$. Assume condition \emph{(\ref{clos cond 2}%
)-(\ref{key assumption comparison})}. Then, there exist two strong solutions
$u,v$ to equation \emph{(\ref{nonconvex target equation})-(\ref{ic})}
corresponding to the initial data $u_{0}$ and $v_{0}$, respectively, such that
$u\leq v$ for all $t\in\lbrack0,T]$.
\end{theorem}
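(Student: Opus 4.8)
The plan is to run the WED scheme of (\ref{wed comparison}) at fixed $\varepsilon>0$ for the two initial data, to compare the corresponding minimizers via the submodularity assumption (\ref{key assumption comparison}) and Lemma \ref{abstract comparison principle}, and finally to pass to the limit $\varepsilon\to0$ while preserving the order relation. Since $f\equiv g$ does not depend on $u$, the functional $I_\varepsilon$ has a genuine variational structure and no fixed-point argument is needed. First I would fix $\varepsilon>0$ and establish existence of a minimizer of $I_\varepsilon$ over $K(u_0)$ and of a minimizer over $K(v_0)$ by the Direct Method: coercivity follows from $\psi\in\Theta_p(V)$, $\varphi^1\in\Theta_m(X)$ together with the relative bound (\ref{hp phi2}), which gives $\varphi^1-\varphi^2\geq(1-k)\varphi^1-C_1$; lower semicontinuity of the convex part is standard, while the concave contribution $-\varphi^2$ is controlled along minimizing sequences through the compact embedding $X\hookrightarrow V$ and an Aubin--Lions argument, which provides strong convergence in $L^p(0,T;V)$ and hence continuity of $\varphi^2$ there. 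This is precisely the existence part of the WED theory, which I would invoke.

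Next, let $u_\varepsilon$ and $v_\varepsilon$ be minimizers of $I_\varepsilon$ over $K(u_0)$ and $K(v_0)$, respectively. By (\ref{min w1p}) the lattice operations preserve $W^{1,p}(0,T;V)$, and since $u_0\le v_0$ one has $(u_\varepsilon\wedge v_\varepsilon)(0)=u_0\wedge v_0=u_0$ and $(u_\varepsilon\vee v_\varepsilon)(0)=u_0\vee v_0=v_0$, so that $u_\varepsilon\wedge v_\varepsilon\in K(u_0)$ and $u_\varepsilon\vee v_\varepsilon\in K(v_0)$. Minimality yields $I_\varepsilon(u_\varepsilon)\le I_\varepsilon(u_\varepsilon\wedge v_\varepsilon)$ and $I_\varepsilon(v_\varepsilon)\le I_\varepsilon(u_\varepsilon\vee v_\varepsilon)$, whereas the submodularity (\ref{key assumption comparison}) gives the reverse inequality for the sum; hence all inequalities are equalities and $u_\varepsilon\wedge v_\varepsilon$, $u_\varepsilon\vee v_\varepsilon$ are themselves minimizers over $K(u_0)$, $K(v_0)$. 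Relabelling $\bar u_\varepsilon:=u_\varepsilon\wedge v_\varepsilon$ and $\bar v_\varepsilon:=u_\varepsilon\vee v_\varepsilon$, I obtain two minimizers with $\bar u_\varepsilon\le\bar v_\varepsilon$ pointwise. This is exactly the content of the abstract comparison Lemma \ref{abstract comparison principle} specialized to $I_\varepsilon$.

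Finally I would pass to the limit. Each of $\bar u_\varepsilon$, $\bar v_\varepsilon$ solves the elliptic-in-time regularization associated with $I_\varepsilon$, and the WED convergence result (the analogue of the last assertion of Proposition \ref{wed approach nonpotential} in the potential case $f\equiv g$) provides a priori bounds in $L^m(0,T;X)\cap W^{1,p}(0,T;V)$ and, along a subsequence, strong convergence in $C([0,T];V)$ to strong solutions $u$, $v$ of (\ref{nonconvex target equation})-(\ref{ic}) with $u(0)=u_0$ and $v(0)=v_0$. I would extract a single (diagonal) sequence $\varepsilon_n\to0$ along which both families converge. Since $\bar u_{\varepsilon_n}(t)\to u(t)$ and $\bar v_{\varepsilon_n}(t)\to v(t)$ in $V$ for every $t$, and $\bar u_{\varepsilon_n}\le\bar v_{\varepsilon_n}$, the closedness of the order cone (\ref{clos cond 2}) yields $u(t)\le v(t)$ for all $t\in[0,T]$, which is the assertion.

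The main obstacle is the limit passage and, within it, the identification of the limit as a strong solution: because $I_\varepsilon$ carries the concave term $-\varphi^2$, the Euler--Lagrange equation for the minimizers and the recovery of $\eta^2\in\partial_V\varphi^2(u)$ in the limit are not immediate and hinge on the compact embedding $X\hookrightarrow V$ to upgrade weak to strong convergence. By contrast, the comparison mechanism itself (the second and third paragraphs) is robust once (\ref{key assumption comparison}) and (\ref{clos cond 2}) are granted. A minor technical point is ensuring that a single subsequence $\varepsilon_n$ serves both $\bar u_{\varepsilon_n}$ and $\bar v_{\varepsilon_n}$, which is achieved by a nested extraction.
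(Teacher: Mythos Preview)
Your proposal is correct and follows essentially the same route as the paper's proof: pick WED minimizers for the two initial data, use (\ref{min w1p}) and the submodularity (\ref{key assumption comparison}) together with Lemma \ref{abstract comparison principle} to replace them by ordered minimizers $\bar u_\varepsilon\le\bar v_\varepsilon$, and then invoke the WED convergence (Proposition \ref{comp princ dn}) along a nested subsequence, using (\ref{clos cond 2}) to pass the order to the limit. Your additional commentary on the Direct Method and on the difficulties of the $\varepsilon\to0$ passage is accurate but not needed for the argument, since these are precisely the contents of Proposition \ref{comp princ dn}.
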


Note that solutions to (\ref{nonconvex target equation})-(\ref{ic}) are, in
general, nonunique. Thus, we can not expect the statement of the theorem to
hold for every couple $u,v$ of solutions corresponding to the initial data
$u_{0}$ and $v_{0}$ (take $u_{0}=v_{0}$).

Several applications of Theorem \ref{comp princ 2} to local and nonlocal PDE
problems will be presented in Section \ref{section examples}.

\subsubsection{Preliminary results for the proof of Theorem \ref{comp princ 2}%
}

In this section we collect some preliminary results, which will be used in the
proof of Theorem \ref{comp princ 2}. Under the assumptions of Theorem
\ref{comp princ 2}, namely $f$ independent of $u$ the WED procedure simplifies
as a fixed-point argument is no longer necessary. More precisely, the
following proposition has been proved in \cite{Ak-Me} (see also \cite{Ak-St,
Ak-St2, Ak-St3, Mi-St}).

\begin{proposition}
[WED approach 2]\label{comp princ dn}Let the assumptions of Theorem
\emph{\ref{comp princ 2}} be satisfied. Then, for each $g\in L^{p^{\prime}%
}(0,T;V^{\ast})$ and $u_{0}\in X$ the \emph{WED} functional $I_{\varepsilon}$,
defined by \emph{(\ref{wed comparison}),} admits at least one global minimizer
$u_{\varepsilon}$ over the set $K(u_{0})=\{u\in W^{1,p}(0,T;V):u(0)=u_{0}\}$.
Moreover, for every sequence $\varepsilon_{n}\rightarrow0$ there exists a (not
relabeled) subsequence such that
\begin{equation}
u_{\varepsilon_{n}}\rightarrow u~\text{strongly in }C\left(  [0,T];V\right)
\label{strong conv 2}%
\end{equation}
and $u$ is a strong solution of system \emph{(\ref{nonconvex target equation}%
)-(\ref{ic})}.
\end{proposition}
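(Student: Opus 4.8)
The plan is to prove the two assertions separately: for each fixed $\varepsilon>0$, existence of a minimizer of $I_\varepsilon$ over $K(u_0)$ by the Direct Method of the Calculus of Variations; and then the limit $\varepsilon\to0$ by uniform a priori estimates combined with monotone-operator arguments. The essential simplification relative to Proposition \ref{wed approach nonpotential} is that here $f(u)=g$ is prescribed, so $I_\varepsilon$ is a genuine functional of $\tilde u$ alone and no fixed-point step is needed; note however that, because of the concave term $-\varphi^2(\tilde u)$, the functional $I_\varepsilon$ is in general nonconvex and its minimizer need not be unique.

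For the existence of a minimizer I would first check coercivity of $I_\varepsilon$ on $K(u_0)\cap L^m(0,T;X)$. The positive contributions $\varepsilon\psi(\tilde u')$ and $\varphi^1(\tilde u)$ control, through $\psi\in\Theta_p(V)$ and $\varphi^1\in\Theta_m(X)$ and the fact that the weight satisfies $\mathrm{e}^{-t/\varepsilon}\geq\mathrm{e}^{-T/\varepsilon}>0$ on $[0,T]$, the norms of $\tilde u'$ in $L^p(0,T;V)$ and of $\tilde u$ in $L^m(0,T;X)$. The concave term $-\varphi^2(\tilde u)$ is absorbed by the domination (\ref{hp phi2}): since $k<1$, one has $\varphi^1-\varphi^2\geq(1-k)\varphi^1-C_1$. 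The linear pairing $-\langle g,\tilde u\rangle_V$ is controlled by Young's inequality (using $\tilde u\in C([0,T];V)$, which follows from the $W^{1,p}$-bound and $u(0)=u_0$) and absorbed into the coercive terms. Hence sublevels of $I_\varepsilon$ are bounded in $W^{1,p}(0,T;V)\cap L^m(0,T;X)$, and by reflexivity a minimizing sequence has a weakly convergent subsequence. For lower semicontinuity, the convex part $\int_0^T\mathrm{e}^{-t/\varepsilon}(\varepsilon\psi(\tilde u')+\varphi^1(\tilde u))\,\mathrm{d}t$ is weakly lower semicontinuous by convexity of $\psi,\varphi^1$; the delicate concave term $-\int_0^T\mathrm{e}^{-t/\varepsilon}\varphi^2(\tilde u)\,\mathrm{d}t$ I would treat via the compact injection $X\hookrightarrow V$: Aubin--Lions yields strong convergence in $L^p(0,T;V)$, along which, using that the finite-valued convex l.s.c. functional $\varphi^2$ is continuous on $V$ together with the growth control (\ref{hp phi2}) to pass to the limit under the integral, one gets convergence of the $\varphi^2$-term. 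This produces a minimizer $u_\varepsilon$.

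For the limit $\varepsilon\to0$ I would record the associated elliptic-in-time Euler--Lagrange system (of the form (\ref{nonconvex target equation eps})-(\ref{ic eps}), now with $\eta^2_\varepsilon\in\partial_V\varphi^2(u_\varepsilon)$ and the natural boundary condition $\mathrm{d}_V\psi(u_\varepsilon')(T)=0$), and then derive estimates uniform in $\varepsilon$. Testing the equation by $u_\varepsilon'$ and integrating over $(0,T)$, exploiting the chain rule for $\psi$ and $\varphi^1$ and the boundary term, I expect bounds on $\sup_t\varphi^1(u_\varepsilon(t))$, on $\int_0^T\psi(u_\varepsilon')\,\mathrm{d}t$, and on $\varepsilon$ times a norm of $(\mathrm{d}_V\psi(u_\varepsilon'))'$, giving a uniform bound in $W^{1,p}(0,T;V)\cap L^\infty(0,T;X)$. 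Compactness (Aubin--Lions--Simon with $X\hookrightarrow V$ compact) then yields, along a subsequence, $u_{\varepsilon_n}\to u$ strongly in $C([0,T];V)$, together with weak limits of $u_{\varepsilon_n}'$, $\eta^1_{\varepsilon_n}$, $\eta^2_{\varepsilon_n}$ and $\mathrm{d}_V\psi(u_{\varepsilon_n}')$. Finally, I would pass to the limit in the Euler--Lagrange equation, show that $\varepsilon_n(\mathrm{d}_V\psi(u_{\varepsilon_n}'))'\to0$, and identify the weak limits of the monotone nonlinearities with $\partial_V\varphi^1(u)$, $\partial_V\varphi^2(u)$ and $\mathrm{d}_V\psi(u')$ by maximal-monotonicity and Minty-type $\limsup$ inequalities, using the strong $C([0,T];V)$ convergence.

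The main obstacle is precisely this last package: obtaining energy estimates that remain uniform as the exponential weight $\mathrm{e}^{-t/\varepsilon}$ degenerates, verifying that the elliptic-in-time regularization term vanishes, and identifying the limits of the three nonlinear (sub)differential operators with the correct maximal-monotone graphs. This is the technical heart of the WED method; as the statement is quoted from \cite{Ak-Me}, the limiting analysis there (and in \cite{Ak-St, Ak-St2, Ak-St3, Mi-St}) would be followed, the only structural simplification being the absence of the fixed-point step since $f\equiv g$ is fixed.
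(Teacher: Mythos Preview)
Your outline is correct and matches the standard WED argument, but note that the paper does not actually prove this proposition: immediately before stating it, the paper writes ``the following proposition has been proved in \cite{Ak-Me} (see also \cite{Ak-St, Ak-St2, Ak-St3, Mi-St})'' and gives no further argument. Your sketch---Direct Method for existence of a minimizer (coercivity via $\psi\in\Theta_p(V)$, $\varphi^1\in\Theta_m(X)$, absorption of $-\varphi^2$ through (\ref{hp phi2}) with $k<1$; weak lower semicontinuity of the convex part plus Aubin--Lions strong compactness to pass to the limit in the concave $\varphi^2$-term), Euler--Lagrange system, $\varepsilon$-uniform energy estimates, Aubin--Lions--Simon compactness giving $C([0,T];V)$ convergence, and Minty-type identification of the limits of the three nonlinear operators---is precisely the route taken in those references, and you correctly flag that the fixed-point step of Proposition~\ref{wed approach nonpotential} is unnecessary here because $f\equiv g$ is prescribed. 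In short, your proposal is the proof the paper is citing; there is nothing to compare beyond that.
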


In order to prove Theorem \ref{comp princ 2}, we take advantage of the
following abstract comparison principle for minimizers of functionals.

\begin{lemma}
[Abstract comparison principle]\label{abstract comparison principle}Let $A,B$
be sets. Let $\alpha,\beta:A\times A\rightarrow A$ be two maps. Let
$M_{0}:A\rightarrow B$ be a function. Let $I:A\rightarrow\mathbb{R}%
\cup\{+\infty\}$ be such that for every $\bar{u}\in B$ there exists at least a
minimizer of $I$ over the set $K(\bar{u})=\{w\in A:M_{0}(w)=\bar{u}\}$.
Assume
\begin{equation}
I(\alpha(u,v))+I(\beta(u,v))\leq I(u)+I(v)\text{,}
\label{condition for comparison princ}%
\end{equation}
for all $u\in K(u_{0})$, $v\in K(v_{0})$, $M_{0}\left(  \alpha(u,v)\right)
=v_{0}$, and $M_{0}\left(  \beta(u,v)\right)  =u_{0}$. Fix%
\begin{align*}
u  &  \in\arg\min_{w\in K(u_{0})}I(w)\text{,}\\
v  &  \in\arg\min_{w\in K(v_{0})}I(w)\text{.}%
\end{align*}
Then, $\alpha(u,v)$ and $\beta(u,v)$ are minimizers of $I$ over $K(v_{0})$ and
$K(u_{0})$ respectively. Furthermore, if additionally $I$ has a unique
minimizer over $K(\bar{u})$ for all $\bar{u}\in B$, then $\alpha(u,v)=v$ and
$\beta(u,v)=u$.
\end{lemma}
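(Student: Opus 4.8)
The plan is to run the classical two-sided squeeze argument that underlies all submodularity/lattice estimates for minimizers. First I would record the role of the two structural identities in the hypothesis: since $M_0(\alpha(u,v))=v_0$ and $M_0(\beta(u,v))=u_0$, the competitors land in the correct fibres, namely $\alpha(u,v)\in K(v_0)$ and $\beta(u,v)\in K(u_0)$. This is the only place where the compatibility of $\alpha,\beta$ with $M_0$ is used, and it is exactly what makes the comparison between the fixed minimizers $u\in\arg\min_{K(u_0)}I$ and $v\in\arg\min_{K(v_0)}I$ and their images meaningful.

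Next I would exploit minimality fibre by fibre. Because $u$ minimizes $I$ over $K(u_0)$ and $\beta(u,v)\in K(u_0)$, one has $I(u)\le I(\beta(u,v))$; symmetrically, $v$ minimizes over $K(v_0)$ and $\alpha(u,v)\in K(v_0)$ give $I(v)\le I(\alpha(u,v))$. Summing these two one-sided inequalities and then inserting the standing hypothesis \emph{(\ref{condition for comparison princ})} produces the chain
\[
I(u)+I(v)\le I(\beta(u,v))+I(\alpha(u,v))\le I(u)+I(v),
\]
so that every inequality appearing in it is forced to be an equality. In particular the middle sum equals $I(u)+I(v)$, and combining this with the two separate inequalities $I(u)\le I(\beta(u,v))$ and $I(v)\le I(\alpha(u,v))$ yields $I(\alpha(u,v))=I(v)$ and $I(\beta(u,v))=I(u)$. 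Hence $\alpha(u,v)$ is a minimizer over $K(v_0)$ and $\beta(u,v)$ is a minimizer over $K(u_0)$, which is the first assertion. If minimizers are additionally unique on each fibre, then $\alpha(u,v)$ and $v$ are both the unique minimizer over $K(v_0)$, whence $\alpha(u,v)=v$, and likewise $\beta(u,v)=u$.

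The only genuinely delicate point, and the one I would flag as the main obstacle, is the passage from equality of the \emph{sum} $I(\beta(u,v))+I(\alpha(u,v))=I(u)+I(v)$ to equality of the individual terms: this deduction implicitly subtracts values and therefore needs the minimal values to be finite, so as to avoid an $\infty-\infty$ indeterminacy. This finiteness is automatic in the application to Theorem \ref{comp princ 2}, where $I=I_\varepsilon$ is proper and its minimizers carry finite energy (the admissible class is nonempty with $u_0,v_0\in D(\varphi^1)$); in the abstract statement I would simply read the existence-of-minimizer hypothesis as granting $\min_{K(\bar u)}I\in\mathbb{R}$. Everything else is bookkeeping: the whole analytic content of the lemma is concentrated in the submodular inequality \emph{(\ref{condition for comparison princ})}, which for the WED functional is verified separately through assumption \emph{(\ref{key assumption comparison})}.
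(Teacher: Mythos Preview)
Your proof is correct and follows essentially the same two-sided squeeze as the paper: use minimality on each fibre to get $I(u)\le I(\beta(u,v))$ and $I(v)\le I(\alpha(u,v))$, combine with the submodular inequality to force equality, and read off that $\alpha(u,v),\beta(u,v)$ are minimizers (with the uniqueness clause immediate). Your remark on the finiteness of the minimal values needed to split the sum is a point the paper leaves implicit.
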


\begin{proof}
Let $u$ and $v$ be minimizers of $I$ over $K(u_{0})$ and $K(v_{0})$
respectively and let $M_{0}\left(  \beta(u,v)\right)  =u_{0}$ and
$M_{0}\left(  \alpha(u,v)\right)  =v_{0}$. Then, we have
\begin{align*}
I\left(  u\right)   &  \leq I\left(  \beta(u,v)\right)  \text{, }\\
I\left(  v\right)   &  \leq I\left(  \alpha(u,v)\right)  \text{.}%
\end{align*}
By using the property (\ref{condition for comparison princ}), we get
\begin{align*}
I\left(  u\right)   &  \leq I\left(  \beta(u,v)\right)  \leq
I(u)+I(v)-I(\alpha(u,v)),\\
I\left(  v\right)   &  \leq I\left(  \alpha(u,v)\right)  \leq
I(u)+I(v)-I(\beta(u,v)),
\end{align*}
Thus,%
\begin{align*}
I(\beta(u,v))  &  \leq I(u)\text{,}\\
I(\alpha(u,v))  &  \leq I(v)\text{ }%
\end{align*}
Therefore, $\alpha(u,v)$ minimizes $I$ over $K(v_{0})$ and $\beta(u,v)$
minimizes $I$ over $K(u_{0})$. If additionally the minimizers are unique, then
$v=\alpha(u,v)$ and $\beta(u,v)=u$.
\end{proof}

\subsubsection{Proof of Theorem \ref{comp princ 2}}

With this preparation we are now in the position of proving Theorem
\ref{comp princ 2}. Let $u_{\varepsilon}$ and $v_{\varepsilon}$ be minimizers
of $I_{\varepsilon}$ over $K(u_{0})$ and $K(v_{0})$ respectively. Recalling
that $K(\bar{u})=\{\tilde{u}\in W^{1,p}(0,T;V):\tilde{u}(0)=\bar{u}\}$, and
using assumptions (\ref{min w1p}), we have that $u_{\varepsilon}%
^{1}:=u_{\varepsilon}\wedge v_{\varepsilon}\in K(u_{0})$ and $v_{\varepsilon
}^{1}:=u_{\varepsilon}\vee v_{\varepsilon}\in K(v_{0})$. By applying Lemma
\ref{abstract comparison principle} with $A=W^{1,p}(0,T;V)$, $B=V$,
$\alpha(u,v)=u\vee v$, $\beta(u,v)=u\wedge v$, $M_{0}(w)=w(0)$, we get that
$u_{\varepsilon}^{1}$ and $v_{\varepsilon}^{1}$ minimize $I_{\varepsilon}$
over $K(u_{0})$ and $K(v_{0})$ respectively and $u_{\varepsilon}^{1}\leq
v_{\varepsilon}^{1}$ a.e. in $[0,T]$. Thanks to Proposition
\ref{comp princ dn}, there exists a sequence $\varepsilon_{n}\rightarrow0$
such that $u_{\varepsilon_{n}}^{1}\rightarrow u$ a.e. in $[0,T]$ and $u$ is a
strong solution to the doubly-nonlinear problem
(\ref{nonconvex target equation})-(\ref{ic}). Moreover, there exists a (not
relabeled) subsequence such that $v_{\varepsilon_{n}}^{1}\rightarrow v$ a.e.
in $[0,T]$ and $v$ solves system (\ref{nonconvex target equation})-(\ref{ic}).
Thus, thanks to convergence (\ref{strong conv 2}) and the closedness condition
(\ref{clos cond 2}), $u\leq v$.

Let us remark that the uniqueness of minimizers of the WED functional was not
used here.

\section{Applications\label{section examples}}

In this section we present several applications of Theorem
\ref{abstract invariance principle} and Theorem \ref{comp princ 2} to some PDE
problems of local and nonlocal type.

\subsection{Doubly-nonlinear parabolic equations}

Consider the family of \textit{doubly-nonlinear equations of }$m$%
\textit{-Laplace type} given by
\begin{align}
&  \alpha(u_{t})-\operatorname{div}(B(x)|\nabla u|^{m-2}\nabla
u)+C(x)|u|^{m-2}u\nonumber\\
&  -D(x)|u|^{q-2}u-h(x,t)=0\text{ in }\Omega\times(0,T)\text{,}%
\label{example 1}\\
&  u+b\frac{\partial u}{\partial n}=0\text{ on }\partial\Omega\times
(0,T)\text{,}\label{bc example 1}\\
&  u(0)=u_{0}\text{ in }\Omega. \label{ic example 1}%
\end{align}
Here, we assume that $\Omega\subset\mathbb{R}^{d}$ is bounded with Lipschitz
boundary $\partial\Omega$ and $\alpha:\mathbb{R}\rightarrow\mathbb{R}$ is
maximal monotone. Moreover, let exist a constant $\tilde{C}$ such that
\begin{equation}
\frac{1}{\tilde{C}}|s|^{p}-\tilde{C}\leq A(s):=\int_{0}^{s}\alpha
(r)\mathrm{d}r\text{ and }|\alpha(s)|^{p^{\prime}}\leq\tilde{C}\left(
|s|^{p}+1\right)  \text{ for all }s\in\mathbb{R}\text{.} \label{def alpha}%
\end{equation}
We assume $m,q,p$ to satisfy the following relations: $m \geq2 $,
$1<p<m^{\ast}:=dm/(d-m)^{+}$, $1<q\leq p$. We consider $b$ constant and
strictly positive. We remark that this choice is made for sake of simplicity
and other types of boundary conditions, e.g, Neumann or Dirichlet boundary
conditions can be treated similarly and with no additional difficulties. Let
$h\in L^{p^{\prime}}(0,T;L^{p^{\prime}}(\Omega))$. We assume the coefficients
$B,C,D\in L^{\infty}(\Omega)$ to be positive a.e. in $\Omega$. Moreover,
$0<b_{1}\leq B(x)$ for a.e. $x\in\Omega$ and some $b_{1} \in\mathbb{R}$.

With the aim of applying the abstract theory of Section \ref{sec qual prop},
we recast system (\ref{example 1})-(\ref{ic example 1}) into the abstract form
(\ref{nonconvex target equation})-(\ref{ic}). To this end, we set
$V=L^{p}(\Omega)$, $X=W^{1,m}(\Omega)$,\ and%
\begin{align}
\psi(u)  &  =\int_{\Omega}A(u)\text{,}\\
\varphi^{1}(u)  &  =\left\{
\begin{array}
[c]{ll}%
\displaystyle{\int_{\Omega}\left(  \frac{1}{m}B|\nabla u|^{m}+\frac{1}%
{m}C|u|^{m}\right)  \mathrm{d}x+\int_{\partial\Omega}\frac{1}{2b}%
|u|^{2}\text{,}} & \text{if }u\in W^{1,m}(\Omega)\\
+\infty & \text{otherwise,}%
\end{array}
\right. \\
f(u)  &  =0\text{, \ \ \ \ }\varphi^{2}(u)=D\frac{1}{q}|u|^{q}+\left\langle
h,u\right\rangle _{V}\text{.} \label{choice 2}%
\end{align}

System (\ref{example 1})-(\ref{ic example 1}) is a doubly-nonlinear version of
the Allen-Cahn equation coupled with Robin boundary conditions. The existence
of a strong solution $u$ to (\ref{example 1})-(\ref{ic example 1}) in the
sense of Definition \ref{solution to nonconvex target} follows by a direct
application of Proposition \ref{wed approach nonpotential} (for checking that
assumptions of Proposition \ref{wed approach nonpotential} are satisfied we
refer the reader to \cite{Ak-St, Ak-St2}). We recall that, if $u$ solves
(\ref{example 1})-(\ref{ic example 1}) in the sense of Definition
\ref{solution to nonconvex target}, then, $u\in W^{1,p}(0,T;L^{p}(\Omega))\cap
L^{m}(0,T;W^{1,m}(\Omega))$, $\operatorname{div}(B|\nabla u(t)|^{m-2}\nabla u(t))\in
L^{p^{\prime}}(\Omega)$ for a.e. $t\in(0,T)$, and $u$ solves (\ref{example 1})
pointwise a.e. in $\Omega\times(0,T)$. It is worth mentioning that the
uniqueness of solution may essentially fail, e.g., in the case of the
sublinear heat equation $u_{t}-\Delta u=|u|^{q-2}u$, $1<q<2$. Indeed, the
latter admits positive solutions even for zero initial data.

We aim at proving existence of solutions to (\ref{example 1}%
)-(\ref{ic example 1}) which satisfy qualitative properties such as
symmetries, monotonicity, and upper and lower bounds. To this end, we
introduce some maps $R:L^{p}\left(  \Omega\right)  \rightarrow L^{p}(\Omega)$
to describe the mentioned properties, together with compatibility assumptions
on the data.

\begin{enumerate}
\item \textit{Linear rigid transformation of the space:} $Ru(x)=u(rx)$ for
some $r\in SL(d,\mathbb{R})= \{ r \in M(\mathbb{R}^{d\times d}):|\det r|=1
\}$, $r\Omega=\Omega$, and $B,C,D,h$ are $R$-invariant;

\item \textit{Symmetric decreasing rearrangement} or \textit{Schwartz
symmetrization }\cite{Ka}\textit{: }$Ru=\left(  u^{+}\right)  ^{\ast}$,
$\Omega$ is radially symmetric, $B$, $C$, and $D$ are constant a.e. in
$\Omega$, and $h=\left(  h^{+}\right)  ^{\ast}$ a.e. in $\Omega\times(0,T)$;

\item \textit{Symmetric decreasing rearrangement with respect to the
hyperplane} $H\subset\mathbb{R}^{d}$ \textit{(}or\textit{ Steiner
symmetrization }in case $\dim H=1$) \cite{Ka}: $Ru=\left(  u^{+}\right)
^{\ast,H}$, $\Omega$ is invariant under the action of any rotation and
reflection which maps $H$ into $H$, $B$, $C$, and $D$ are constant a.e. in
$\Omega$, $h=\left(  h^{+}\right)  ^{\ast,H}$ a.e. in $\Omega\times(0,T)$;

\item \textit{Monotone decreasing rearrangement with respect to the direction
}$y\in\mathbb{R}^{d}$ \cite{Ka}: $Ru=\left(  u^{+}\right)  ^{\ast,y}$,
$\Omega=\Omega^{\ast, y}$, $B$, $C$, and $D$ are constant in the direction of
$y$, a.e. in $\Omega$, and $h=\left(  h^{+}\right)  ^{\ast,y}$ a.e. in
$\Omega\times(0,T)$;

\item \textit{Lower truncation}: $R(u)=(u-M)^{+}+M$, where $M\leq0$ is
constant, $h\geq0$ a.e. in $\Omega\times(0,T)$, and either $M=0$ or $D=0$;

\item \textit{Upper truncation:} $R(u)=M-(M-u)^{+}$, where $M\geq0$ is
constant, $h\leq0$ a.e. in $\Omega\times(0,T)$, and either $M=0$ or $D=0$.
\end{enumerate}

The definitions and some basic properties of the rearrangement maps are
collected in the appendix for the reader's convenience (see also \cite{Ka} for
a survey).

Assume $Ru_{0}=u_{0}$. By applying Theorem \ref{abstract invariance principle}%
, we conclude that there exists a solution $u$ to the Cauchy problem
(\ref{example 1})-(\ref{ic example 1}) such that $Ru=u$ a.e. in $\Omega
\times\lbrack0,T]$. Furthermore, by applying Theorem \ref{comp princ 2}, we
can also deduce a comparison principle for solutions to (\ref{example 1}%
)-(\ref{ic example 1}). More\ precisely, we have the following theorem.

\begin{theorem}
[Doubly-nonlinear parabolic equation (\ref{example 1})-(\ref{ic example 1}%
)]\label{thm example 1}\label{thm example 5}Let the above assumptions be
satisfied and let $R_{i}$, $i=1,...,k$, be any collection of maps as defined
above. Assume $R_{i}u_{0}=u_{0}$ for $i=1,...,k$. Then, there exists a strong
solution $u$ (in the sense of Definition
\emph{\ref{solution to nonconvex target}}) to the Cauchy problem
\emph{(\ref{example 1})-(\ref{ic example 1})} such that $R_{1}\circ...\circ
R_{k}u=u$.

Furthermore, let $u_{0}\leq v_{0}$ a.e. in $\Omega$. Then, there exist $u,v$
strong solutions to \emph{(\ref{example 1})-(\ref{bc example 1})} such that
$u(0)=u_{0}$, $v\left(  0\right)  =v_{0}$, and $u\leq v$ a.e. in $\Omega
\times(0,T)$.
\end{theorem}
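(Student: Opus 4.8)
The plan is to reduce both assertions to the abstract machinery already established: for the symmetry statement I would verify the hypotheses of Theorem \ref{abstract invariance principle} and Corollary \ref{coroll composition}, and for the comparison statement those of Theorem \ref{comp princ 2}, all for the concrete data $V=L^{p}(\Omega)$, $X=W^{1,m}(\Omega)$ with $\psi,\varphi^{1},\varphi^{2}$ as in (\ref{choice 2}). The structural hypotheses on $\psi,\varphi^{1},\varphi^{2},f$ are exactly those checked for this class in \cite{Ak-St,Ak-St2} (the $p$-growth of $\psi$ coming from (\ref{def alpha}) and the coercivity of $\varphi^{1}$ from $b_{1}\le B$), so the genuine content is the verification of (R1), of (R2.1)-(R2.3), and of (\ref{clos cond 2})-(\ref{key assumption comparison}) for the listed maps.

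For the symmetry part I would handle the three mechanisms separately. For the linear rigid transformation $Ru(x)=u(rx)$, the set $V_{R}$ of $R$-invariant functions is a closed linear subspace (hence convex, $\delta$-star-shaped and containing $u_{0}$), $R$ commutes with $\partial_{t}$ and preserves $L^{p}$, so (R1) holds; (R2.1) and (R2.2) hold with \emph{equality} by the change of variables $x\mapsto rx$ using $r\Omega=\Omega$ and the $R$-invariance of $B,C,D$, and (R2.3) in the form (\ref{1 approach 2}) holds with equality once $v\in V_{R}$ makes $F(v)$ $R$-invariant. For the truncations $Ru=\max(u,M)$ and $Ru=\min(u,M)$, $R$ is $1$-Lipschitz so (R1) holds and $V_{R}=\{u\ge M\}$ (resp. $\{u\le M\}$) is convex, closed and $\delta$-star-shaped, the signs $M\le0$, resp. $M\ge0$, being exactly what makes $\delta V_{R}\subset V_{R}$; then (R2.1) is pointwise from $|\nabla Ru|\le|\nabla u|$ and $|Ru|\le|u|$, (R2.2) follows from $\partial_{t}Ru=\partial_{t}u\,\mathbf{1}_{\{u>M\}}$ together with $A\ge0=A(0)$, and (R2.3) in the form (\ref{1 approach 2}) from the sign of $Ru-u$ against that of $F(v)=D|v|^{q-2}v+h$ on $V_{R}$, which is where the alternative $M=0$ or $D=0$ and the sign of $h$ enter. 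For the rearrangements $Ru=(u^{+})^{\ast}$ and its Steiner/hyperplane/directional variants, $V_{R}$ is the closed convex cone of nonnegative symmetrically-decreasing functions; (R2.1) for the $m$-gradient term is the Pólya–Szegő inequality (using $B$ constant), the zeroth-order term is an equality by equimeasurability, and (R2.3) in the form (\ref{1 approach 2}) is the Hardy–Littlewood inequality, using that for $v\in V_{R}$ and $h=(h^{+})^{\ast}$ the weight $F(v)=D\,v^{q-1}+h$ is itself symmetrically decreasing. Since every map satisfies (R1) and the approach-(\ref{1 approach}) version of (R2), the composition is obtained by running the Schaefer argument behind Corollary \ref{coroll composition} on $L^{p}(0,T;\bigcap_{i}V_{R_{i}})$; the intersection is again convex, closed, $\delta$-star-shaped and contains $u_{0}$, producing $u_{\varepsilon}$ invariant under every $R_{i}$, and the invariance passes to the limit $\varepsilon\to0$ of Proposition \ref{wed approach nonpotential} since each $V_{R_{i}}$ is closed in $V$.

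I expect the rearrangements to be the main obstacle, for two reasons. First, (R1) requires $Ru\in W^{1,p}(0,T;V)$ and (R2.2) requires $\int_{0}^{T}e^{-t/\varepsilon}\psi(\partial_{t}(Ru))\,\mathrm{d}t\le\int_{0}^{T}e^{-t/\varepsilon}\psi(\partial_{t}u)\,\mathrm{d}t$; both hinge on the delicate behaviour of the \emph{time derivative} of a time-dependent rearrangement, which I would control through the nonexpansivity of the rearrangement on $L^{p}$ and the corresponding pointwise-in-time inequality for convex integrands. Second, $\varphi^{1}$ carries the Robin trace $\int_{\partial\Omega}\tfrac{1}{2b}|u|^{2}$, so (R2.1) additionally needs a trace rearrangement inequality on $\partial\Omega$; this is the step I would be most careful about, and I would isolate it, together with the time-derivative estimate, as lemmas in the appendix.

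For the comparison statement I would first relabel the data so that $f$ is $u$-independent, writing $g=h$ and $\varphi^{2}(u)=\int_{\Omega}\tfrac{D}{q}|u|^{q}$, which fits Theorem \ref{comp princ 2}. Conditions (\ref{clos cond 2}) and (\ref{min w1p}) are the standard order-closedness and lattice stability of $L^{p}(\Omega)$ and of $W^{1,p}(0,T;L^{p}(\Omega))$. The key inequality (\ref{key assumption comparison}) in fact holds with \emph{equality}: writing $u\wedge v$ and $u\vee v$ as the pointwise relabeling of $\{u,v\}$, every term of $I_{\varepsilon}$ decomposes additively, namely $A(\partial_{t}(u\wedge v))+A(\partial_{t}(u\vee v))=A(\partial_{t}u)+A(\partial_{t}v)$ and $|\nabla(u\wedge v)|^{m}+|\nabla(u\vee v)|^{m}=|\nabla u|^{m}+|\nabla v|^{m}$ a.e. (using $\nabla u=\nabla v$ a.e. on $\{u=v\}$), and likewise for the $|\cdot|^{m}$, $|\cdot|^{q}$, boundary, and linear terms. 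Theorem \ref{comp princ 2} then delivers the ordered pair of solutions, completing the proof.
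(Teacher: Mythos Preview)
Your plan is correct and essentially matches the paper's proof: verification of (R1), (R2.1)--(R2.3) for each map followed by Theorem~\ref{abstract invariance principle}/Corollary~\ref{coroll composition}, and the pointwise relabeling identity yielding (\ref{key assumption comparison}) with equality for the comparison part. The only organisational difference is that the paper handles (R1) and (R2.2) for \emph{all} six map types uniformly, by noting that each $R$ satisfies the nonexpansivity inequality $\int_\Omega J(Ru-Rv)\le\int_\Omega J(u-v)$ for every nonnegative convex $J$ with $J(0)=0$, and then passing difference quotients through it (with $J=|\cdot|^{p}$ for (R1) and $J(w)=A(w/(s-t))$ for (R2.2)); this single device subsumes your case-by-case treatment and in particular disposes of the time-derivative issue you flag for rearrangements. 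As for the Robin trace contribution to $\varphi^{1}$, the paper does not isolate it either and simply declares (R2.1) ``standard matter''; your caution there is reasonable, but the inequality does go through under the stated symmetry assumptions on $\Omega$.
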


Note that any map $R_{i}$ is associated with some compatibility conditions on
the data. Let us note that, in case $k\geq2$, these conditions have to be
satisfied simultaneously and, hence, they have to be compatible. This fact is
implicitly guaranteed by the assumption of the existence of some $u_{0}$
satisfying $u_{0}=R_{i}u_{0}$ for all $i=1,...,k$.

\begin{proof}
In order to apply Theorem \ref{abstract invariance principle}, it suffices to
check conditions (R1)-(R2). Note that $R$ satisfies
\begin{equation}
\int_{\Omega}J(Ru-Rv)\leq\int_{\Omega}J(u-v) \label{nonexpansivity}%
\end{equation}
for every $u,v\in V$ and $J:\mathbb{R}\rightarrow\mathbb{R}$ nonnegative,
convex, and such that $J(0)=0$ (see the appendix or \cite{Ka} for the case of
rearrangements). Thus,
\[
\lim_{s\rightarrow t}\int_{\Omega}\left\vert \frac{Ru(s)-Ru(t)}{s-t}%
\right\vert ^{p}\mathrm{d}x\leq\lim_{s\rightarrow t}\int_{\Omega}\left\vert
\frac{u(s)-u(t)}{s-t}\right\vert ^{p}\mathrm{d}x\text{ for a.a. }t\in(0,T).
\]
This fact, together with the Dominated Convergence Theorem, proves that
\[
Ru\in W^{1,p}(0,T;V)\text{ for all }u\in W^{1,p}(0,T;V).
\]
This easily yields (R1). It is standard matter to check (R2.1) (see the
appendix or \cite{Ka} for more details in the case of rearrangement maps). By
definition of $\psi$, for a.a. $t\in(0,T)$, we have%
\[
\psi\left(  \frac{\mathrm{d}}{\mathrm{d}t}(Ru(t))\right)  =\int_{\Omega
}A\left(  \lim_{s\rightarrow t}\frac{Ru(s)-Ru(t)}{s-t}\right)  \mathrm{d}%
x\text{.}%
\]
By using the continuity of $A$,
\begin{align*}
\psi\left(  \frac{\mathrm{d}}{\mathrm{d}t}(Ru(t))\right)   &  =\int_{\Omega
}\lim_{s\rightarrow t}A\left(  \frac{Ru(s)-Ru(t)}{s-t}\right)  \mathrm{d}x\\
&  =\lim_{s\rightarrow t}\int_{\Omega}A\left(  \frac{Ru(s)-Ru(t)}{s-t}\right)
\mathrm{d}x\text{.}%
\end{align*}
Thanks to inequality (\ref{nonexpansivity})\textbf{ }(applied to $w\longmapsto
J(w)=A(\frac{w}{s-t})$),%
\begin{align*}
\lim_{s\rightarrow t}\int_{\Omega}A\left(  \frac{Ru(s)-Ru(t)}{s-t}\right)
\mathrm{d}x  &  \leq\lim_{s\rightarrow t}\int_{\Omega}A\left(  \frac
{u(s)-u(t)}{s-t}\right)  \mathrm{d}x\\
&  =\int_{\Omega}A\left(  \lim_{s\rightarrow t}\frac{u(s)-u(t)}{s-t}\right)
\mathrm{d}x=\psi\left(  \frac{\mathrm{d}}{\mathrm{d}t}u(t)\right)  \text{.}%
\end{align*}
The above computations hold true for a.a. $t\in(0,T)$. Thanks to the upper
bound in (\ref{def alpha}), we have $|A(s)|\leq|\alpha(s)s|\leq C(|s|^{p}+1)$
and hence, by applying the Dominated Convergence Theorem, we get
\[
\int_{0}^{T}\mathrm{e}^{-t/\varepsilon}\psi\left(  \frac{\mathrm{d}%
}{\mathrm{d}t}(Ru(t))\right)  \mathrm{d}t\leq\int_{0}^{T}\mathrm{e}%
^{-t/\varepsilon}\psi\left(  \frac{\mathrm{d}}{\mathrm{d}t}u(t)\right)
\mathrm{d}t\text{ for all }T>0\text{.}%
\]
This yields (R2.2). We readily check that (\ref{1 approach 2})\textbf{ }is
satisfied. In particular, in the case of rearrangements maps $R$, we have
$R\mathrm{d}_{V}\varphi^{2}(v)=\mathrm{d}_{V}\varphi^{2}(v)$ for all $v\in
V_{R}$. Thus, condition (\ref{1 approach 2}) follows from the well known
rearrangement inequality (see the appendix or \cite{Ka}):%
\[
\int_{\Omega}Ra\cdot Rb\geq\int_{\Omega}a\cdot b\text{ for all }a\in
L^{p}(\Omega)\text{, }b\in L^{p}(\Omega)\text{, }a,b\geq0\text{.}%
\]
A direct application of Theorem \ref{abstract invariance principle} and
Corollary \ref{coroll composition} yields the first part of Theorem
\ref{thm example 1}.

To prove the second part, we aim at applying Theorem \ref{comp princ 2}. To
this end, we now verify condition (\ref{key assumption comparison}). Note
that, for all $u,v\in W^{1,p}(0,T;L^{p}(\Omega))\cap L^{m}(0,T;X)$, one has
$u\vee v,u\wedge v\in W^{1,p}(0,T;L^{p}(\Omega))\cap L^{m}(0,T;X)$.
Furthermore, the following relations hold true a.e. in $\Omega\times(0,T)$
\begin{align*}
\left(  u\vee v\right)  ^{\prime}  &  =\left\{
\begin{array}
[c]{cc}%
u^{\prime} & \text{if }u\geq v\text{,}\\
v^{\prime} & \text{if }u<v\text{,}%
\end{array}
\right. \\
\left(  u\wedge v\right)  ^{\prime}  &  =\left\{
\begin{array}
[c]{cc}%
v^{\prime} & \text{if }u\geq v\text{,}\\
u^{\prime} & \text{if }u<v\text{,}%
\end{array}
\right. \\
\nabla\left(  u\vee v\right)   &  =\left\{
\begin{array}
[c]{cc}%
\nabla u & \text{if }u\geq v\text{,}\\
\nabla v & \text{if }u<v\text{,}%
\end{array}
\right. \\
\nabla\left(  u\wedge v\right)   &  =\left\{
\begin{array}
[c]{cc}%
\nabla v & \text{if }u\geq v\text{,}\\
\nabla u & \text{if }u<v\text{,}%
\end{array}
\right. \\
\left(  u\vee v\right)  _{|\partial\Omega}  &  =\left\{
\begin{array}
[c]{cc}%
u_{|\partial\Omega} & \text{if }u\geq v\text{,}\\
v_{|\partial\Omega} & \text{if }u<v\text{,}%
\end{array}
\right. \\
\left(  u\wedge v\right)  _{|\partial\Omega}  &  =\left\{
\begin{array}
[c]{cc}%
v_{|\partial\Omega} & \text{if }u\geq v\text{,}\\
u_{|\partial\Omega} & \text{if }u<v\text{,}%
\end{array}
\right.
\end{align*}
where $w_{|\partial\Omega}$ denotes the trace of $w$ on $\partial\Omega$.
Moreover,
\begin{align*}
&  I_{\varepsilon}(u\vee v)+I_{\varepsilon}(u\wedge v)\\
&  =\int\int_{\Omega\times\left(  0,T\right)  \cap\{u\geq v\}}\mathrm{e}%
^{-t/\varepsilon}G(u)+\int\int_{\Omega\times\left(  0,T\right)  \cap
\{u<v\}}\mathrm{e}^{-t/\varepsilon}G(v)\\
&  +\int\int_{\Omega\times\left(  0,T\right)  \cap\{u\geq v\}}\mathrm{e}%
^{-t/\varepsilon}G(v)+\int\int_{\Omega\times\left(  0,T\right)  \cap
\{u<v\}}\mathrm{e}^{-t/\varepsilon}G(u)\\
&  +\int\int_{\partial\Omega\times\left(  0,T\right)  \cap\{u\geq
v\}}\mathrm{e}^{-t/\varepsilon}\frac{1}{2b}|u|^{2}+\int\int_{\partial
\Omega\times\left(  0,T\right)  \cap\{u<v\}}\mathrm{e}^{-t/\varepsilon}%
\frac{1}{2b}|v|^{2}\\
&  +\int\int_{\partial\Omega\times\left(  0,T\right)  \cap\{u\geq
v\}}\mathrm{e}^{-t/\varepsilon}\frac{1}{2b}|u|^{2}+\int\int_{\partial
\Omega\times\left(  0,T\right)  \cap\{u<v\}}\mathrm{e}^{-t/\varepsilon}%
\frac{1}{2b}|u|^{2}\\
&  =I_{\varepsilon}^{1}(u)+I_{\varepsilon}^{1}(v),
\end{align*}
where $G(u)=\varepsilon\alpha(u^{\prime})+{\frac{1}{m}B|\nabla u|^{m}+\frac
{1}{m}C|u|^{m}-}D\frac{1}{q}|u|^{q}-hu$. By applying Theorem
\ref{abstract invariance principle}, we conclude the proof of Theorem
\ref{thm example 1}.
\end{proof}

\subsection{Fractional heat equation}

We consider the \textit{fractional heat equation }%
\begin{align}
u_{t}+\left(  -\Delta\right)  ^{s}u +\gamma u  &  =g\text{ in }\Omega
\times(0,T)\text{,}\label{fractional parabolic}\\
u  &  =0\text{ in }\left(  \mathbb{R}^{d}\setminus\Omega\right)
\times(0,T)\text{,}\label{frac bc}\\
u(0)  &  =u_{0}\text{ a.e. in }\mathbb{R}^{d}\text{,}
\label{fractional parabolic2}%
\end{align}
where $\Omega\subset\mathbb{R}^{d}$ bounded with Lipschitz boundary,
$\gamma>0$, $s\in(0,1)$, $u_{0}\in H_{0}^{s}(\Omega)$, and $g\in L^{2}%
(\Omega)$. Here, $\left(  -\Delta\right)  ^{s}$ denotes the $s$%
\textit{-fractional Laplace operator }\cite{DNPV}.

Before stating the main result of this section, let us first recall some
definitions and known results. For every $s\in(0,1)$ and $d\in\mathbb{N}$, we
denote by $H^{s}(\mathbb{R}^{d})=W^{s,2}(\mathbb{R}^{d})$ the usual
$s$-fractional Sobolev space equipped with the norm
\begin{align*}
\left\Vert u\right\Vert _{H^{s}(\mathbb{R}^{d})}^{2}  &  =\left\Vert
u\right\Vert _{L^{2}(\mathbb{R}^{d})}^{2}+[u]_{\mathbb{R}^{d},s}^{2}\\
&  =\left\Vert u\right\Vert _{L^{2}(\mathbb{R}^{d})}^{2}+\int_{\mathbb{R}^{d}%
}\int_{\mathbb{R}^{d}}\frac{|u(x)-u(y)|^{2}}{|x-y|^{d+2s}}\mathrm{d}%
x\mathrm{d}y.
\end{align*}
We use the notation
\[
H_{0}^{s}(\Omega)=\{u\in H^{s}(\mathbb{R}^{d}):u=0\text{ a.e. in }%
\mathbb{R}^{d}\setminus\Omega\}.
\]
Let $Q=\mathbb{R}^{2d}\setminus\left(  \mathbb{R}^{d}\setminus\Omega\right)
\times\left(  \mathbb{R}^{d}\setminus\Omega\right)  $. Then, the space
$H_{0}^{s}(\Omega),$ equipped with the norm
\begin{align*}
\left\Vert u\right\Vert _{H_{0}^{s}(\Omega)}^{2}  &  =\left\Vert u\right\Vert
_{L^{2}(\Omega)}^{2}+\iint_{Q}\frac{|u(x)-u(y)|^{2}}{|x-y|^{d+2s}}%
\mathrm{d}x\mathrm{d}y\\
&  =\left\Vert u\right\Vert _{L^{2}(\Omega)}^{2}+[u]_{\mathbb{R}^{d},s}^{2}%
\end{align*}
and with the scalar product
\[
(u,v)_{H_{0}^{s}(\Omega)}=(u,v)_{L^{2}(\Omega)}+\iint_{Q}\frac{\left(
u(x)-u(y)\right)  \left(  v(x)-v(y)\right)  }{|x-y|^{d+2s}}\mathrm{d}%
x\mathrm{d}y\text{,}%
\]
is a Hilbert space \cite[Lemma 7]{Se-Va}. Note that the functional $\varphi:
H^{s}_{0} (\Omega) \to(H^{s}_{0} (\Omega))^{\ast}$ defined by
\[
u \mapsto\varphi(u)=\frac{1}{2}\iint_{Q}\frac{|u(x)-u(y)|^{2}}{|x-y|^{d+2s}%
}\mathrm{d}x\mathrm{d}y
\]
is Fr\'{e}chet differentiable over $H_{0}^{s}(\Omega)$ and
\[
\left\langle \mathrm{d}_{H_{0}^{s}(\Omega)}\varphi(u),v\right\rangle
_{H_{0}^{s}\left(  \Omega\right)  } = \iint_{Q}\frac{\left(  u(x)-u\left(
y\right)  \right)  \left(  v(x)-v\left(  y\right)  \right)  }{|x-y|^{d+2s}%
}\mathrm{d}x\mathrm{d}y
\]
for every $v\in H_{0}^{s}(\Omega)$ \cite{Se-Va}. We define the fractional
Laplacian operator as \cite{DNPV}
\[
-(-\Delta)^{s}u(x)=\int_{\mathbb{R}^{d}}\frac{u(x+y)+u(x-y)-2u(x)}{|y|^{d+2s}%
}\mathrm{d}y\text{, }x\in\mathbb{R}^{d}%
\]
and we note that $\partial_{L^{2} (\Omega)} \varphi(u) = (-\Delta)^{s} u$ for
all $u \in D(\partial_{L^{2} (\Omega)} \varphi)$.

Thanks to the above preparation, we can rewrite equation
(\ref{fractional parabolic})-(\ref{fractional parabolic2}) in the gradient
flow form%
\begin{align}
u_{t}+\partial_{V}\varphi^{1}(u)-\mathrm{d}_{V}\varphi^{2}(u)  &  =g\text{ in
}V^{\ast}\text{, a.e. in }(0,T)\text{,}\label{gf frac}\\
u(0)  &  =u_{0}\text{,} \label{gf frac2}%
\end{align}
where
\begin{align*}
X  &  =H_{0}^{s}\left(  \Omega\right)  \text{, \ \ \ \ \ \ \ \ \ }%
V=L^{2}(\Omega)\text{,}\\
\varphi^{1}(u)  &  =\frac{\gamma}{2}\left\Vert u\right\Vert _{L^{2}(\Omega
)}^{2}+\frac{1}{2}\iint_{Q}\frac{|u(x)-u(y)|^{2}}{|x-y|^{d+2s}}\mathrm{d}%
x\mathrm{d}y\text{,}\\
\varphi^{2}(u)  &  = 0 \text{, \ \ \ \ \ \ \ \ }f(u)=g.
\end{align*}
The well posedness of problem (\ref{gf frac})-(\ref{gf frac2}) follows from
the classical theory of \cite{Br}. We observe that the (unique) solution to
the problem (\ref{gf frac})-(\ref{gf frac2}) in the sense of Definition
\ref{solution to nonconvex target} is a function $u\in L^{2}(0,T;H_{0}%
^{s}(\Omega))\cap H^{1}(0,T;L^{2}(\Omega))$ such that $(-\Delta)^{s} u(t)\in
L^{2}(\Omega)$ for a.e. $t\in(0,T)$ that solves equation
(\ref{fractional parabolic}) a.e. in $\Omega\times(0,T)$.

Aiming at applying Theorem \ref{abstract invariance principle} to prove
qualitative properties of the solution of (\ref{fractional parabolic}%
)-(\ref{fractional parabolic2}), we now introduce some maps $R:L^{2}\left(
\Omega\right)  \rightarrow L^{2}(\Omega)$ which describe qualitative
properties and we fix some compatibility conditions for the data.

\begin{enumerate}
\item \textit{Linear rigid transformation of the space: }$Ru(x)=u(rx)$ for
some $r\in SL(d,\mathbb{R})$, $r\Omega=\Omega$, and $g$ is $R$-invariant;

\item \textit{Symmetric decreasing rearrangement} or \textit{Schwartz
symmetrization} \cite{Ka}:\textit{ }$Ru=\left(  u^{+}\right)  ^{\ast}$,
$\Omega$ is radially symmetric, $g=\left(  g^{+}\right)  ^{\ast}$ a.e. in
$\Omega$;

\item \textit{Positive part:} $R(u)=u^{+}$ and $g\geq0$ a.e. in $\Omega$;

\item \textit{Negative part:} $R(u)=-u^{-}$ and $g\leq0$ a.e. in $\Omega$.
\end{enumerate}

By applying Theorem \ref{abstract invariance principle}, Corollary
\ref{coroll composition}, and Theorem \ref{comp princ 2}, we get the following.

\begin{theorem}
[Fractional heat equation]\label{thm example 2}\label{thm example 4}Let
$R_{i}$, $i=1,...,k$, be any collection of the maps defined above. Then, for
every $u_{0}\in H_{0}^{s}\left(  \Omega\right)  $ such that $R_{i}u_{0}=u_{0}$
for $i=1,...,k$, the strong solution $u$ to \emph{(\ref{fractional parabolic}%
)-(\ref{fractional parabolic2})} fulfills $R_{1}\circ...\circ R_{k}u=u$ a.e.
in $\Omega\times(0,T)$. Moreover, let $u$ and $v$ be the two strong solutions
to \emph{(\ref{fractional parabolic})-(\ref{frac bc})} corresponding to
initial conditions $u(0)=u_{0}$ and $v(0)=v_{0}$, with $u_{0}\leq v_{0}$ a.e.
in $\Omega$. Then, $u\leq v$ a.e. in $\Omega\times\lbrack0,T]$.
\end{theorem}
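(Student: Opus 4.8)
The plan is to obtain both assertions from the abstract machinery of Section~\ref{section assumptions}, applied to the gradient-flow reformulation (\ref{gf frac})-(\ref{gf frac2}), in which $V=L^{2}(\Omega)$, $X=H_{0}^{s}(\Omega)$, $\psi(v)=\tfrac12\|v\|_{L^{2}(\Omega)}^{2}$, $\varphi^{2}=0$, and $f\equiv g$. Since $\varphi^{2}=0$ is trivially Fr\'echet differentiable and satisfies (\ref{hp phi2})-(\ref{hp phi 2.2}), the standing hypotheses of Theorem~\ref{abstract invariance principle} and of Theorem~\ref{comp princ 2} hold, and it remains only to check (R1)-(R2) for the four maps (first assertion) and condition (\ref{key assumption comparison}) (second assertion). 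The whole argument mirrors the proof of Theorem~\ref{thm example 1}, the only genuinely new ingredients being the behaviour of the Gagliardo seminorm $[u]_{s}^{2}:=\iint_{Q}|u(x)-u(y)|^{2}|x-y|^{-d-2s}\,\mathrm{d}x\,\mathrm{d}y$ appearing in $\varphi^{1}(u)=\tfrac{\gamma}{2}\|u\|_{L^{2}}^{2}+\tfrac12[u]_{s}^{2}$.

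For the invariance part I would verify (R1) and (R2.1)-(R2.3) for each of $R_{1},\dots,R_{4}$ and then invoke Theorem~\ref{abstract invariance principle} and Corollary~\ref{coroll composition}. All four maps are non-expansive on $L^{2}(\Omega)$ (the rigid motion is an $L^{2}$-isometry since $|\det r|=1$, the rearrangement is $L^{2}$-non-expansive, and $u\mapsto u^{+}$, $u\mapsto-u^{-}$ are pointwise $1$-Lipschitz), so exactly as in the proof of Theorem~\ref{thm example 1} the difference-quotient estimate combined with dominated convergence gives $Ru\in W^{1,2}(0,T;V)$, hence (R1), and also (R2.2) with $\psi=\tfrac12\|\cdot\|_{L^{2}}^{2}$. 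For (R2.1) the $L^{2}$-part of $\varphi^{1}$ is immediate, while the seminorm is controlled by the pointwise contraction $|(Ru)(x)-(Ru)(y)|\le|u(x)-u(y)|$ for the truncations and, for the rearrangement, by the fractional P\'olya--Szeg\H{o} inequality $[(u^{+})^{\ast}]_{s}\le[u^{+}]_{s}\le[u]_{s}$. Since $F(v)=g$ is constant, (R2.3) reduces to the single inequality $\langle g,Ru\rangle_{V}\ge\langle g,u\rangle_{V}$, which for the positive/negative parts is the elementary $\int_{\Omega}g\,u^{-}\ge0$, respectively $-\int_{\Omega}g\,u^{+}\ge0$, under the sign conditions on $g$, and for the rearrangement follows from the Hardy--Littlewood inequality together with the $R$-invariance $Rg=g$ of the datum. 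The conditions $\delta V_{R}\subset V_{R}$ and $R_{i}u_{0}=u_{0}$ being immediate, Corollary~\ref{coroll composition} then produces a solution invariant under $R_{1}\circ\cdots\circ R_{k}$.

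For the comparison part I would first note that $V=L^{2}(\Omega)$ and $X=H_{0}^{s}(\Omega)$ satisfy the lattice conditions (\ref{clos cond 2}) and (\ref{min w1p}), and then verify (\ref{key assumption comparison}) for the functional (\ref{wed comparison}) with $\varphi^{2}=0$. Splitting the integrand into a local part $\tfrac{\varepsilon}{2}|\tilde u'|^{2}+\tfrac{\gamma}{2}|\tilde u|^{2}-g\tilde u$ and the nonlocal part $\tfrac12[\tilde u]_{s}^{2}$, the local part decouples over $\{u\ge v\}$ and $\{u<v\}$ exactly as in Theorem~\ref{thm example 1} and contributes an equality. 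For the nonlocal part one integrates, against the positive kernel $|x-y|^{-d-2s}$, the pointwise inequality
\[
|(a\wedge c)-(b\wedge d)|^{2}+|(a\vee c)-(b\vee d)|^{2}\le|a-b|^{2}+|c-d|^{2}
\]
with $a=u(x)$, $b=u(y)$, $c=v(x)$, $d=v(y)$, which expresses the Markovian (submodularity) property of the fractional Dirichlet form and yields $[u\wedge v]_{s}^{2}+[u\vee v]_{s}^{2}\le[u]_{s}^{2}+[v]_{s}^{2}$. As the seminorm enters $I_{\varepsilon}$ with positive sign, (\ref{key assumption comparison}) follows, and Theorem~\ref{comp princ 2} delivers ordered solutions $u\le v$; uniqueness for this linear gradient flow then identifies them with the strong solutions in the statement.

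The main obstacle is precisely the nonlocal term. Unlike the local energy of Theorem~\ref{thm example 1}, which decouples over $\{u\ge v\}$ and $\{u<v\}$ and transforms elementarily under rigid motions, the Gagliardo seminorm couples points lying in different regions, so neither (R2.1) under rearrangement nor (\ref{key assumption comparison}) can be reached by a pointwise splitting. They rest instead on the fractional P\'olya--Szeg\H{o} inequality and on the submodularity of the fractional Dirichlet form, which are the two nontrivial (but classical) facts that the argument must quote.
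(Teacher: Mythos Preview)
Your proposal is correct and follows the paper's strategy: both parts reduce to verifying the abstract hypotheses of Section~\ref{section assumptions} in the gradient-flow setting (\ref{gf frac})--(\ref{gf frac2}) and then invoking Theorem~\ref{abstract invariance principle}, Corollary~\ref{coroll composition}, and Theorem~\ref{comp princ 2}, with the fractional P\'olya--Szeg\H{o} inequality supplying (R2.1) for the rearrangement. The only difference is in the comparison step: you invoke the consolidated pointwise submodularity inequality
\[
|(a\wedge c)-(b\wedge d)|^{2}+|(a\vee c)-(b\vee d)|^{2}\le|a-b|^{2}+|c-d|^{2}
\]
and integrate it against the kernel, whereas the paper derives the same estimate by hand, splitting $[u\vee v]_{s}^{2}+[u\wedge v]_{s}^{2}$ into four pieces $A_{1},\dots,A_{4}$ according to the signs of $u-v$ at $x$ and at $y$, and bounding the two cross terms via the elementary identity $(a+b)^{2}+(b+c)^{2}\le b^{2}+(a+b+c)^{2}$ valid for $ac\ge 0$. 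Your formulation is more compact and names the underlying Markovian property of the fractional Dirichlet form; the paper's is self-contained. Either way the content is identical.
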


As already mentioned in the previous section, in the case $k\geq2$, assumption
$R_{i}u_{0}=u_{0}$ for all $i=1,...,k$ implies that the compatibility
conditions associated with any of the maps $R_{i}$ are satisfied\ simultaneously.

\begin{proof}
Taking advantage of the above preparation, we readily check conditions (R1),
(\ref{1 approach}) (and hence (R2)) (see the appendix or \cite{Pa} for the
case of the symmetric decreasing rearrangement). Thus, the first part of
Theorem \ref{thm example 2} follows directly from Theorem
\ref{abstract invariance principle} and Corollary \ref{coroll composition}.

In order to prove the second part of Theorem \ref{thm example 2}, we now check
that assumptions of Theorem \ref{comp princ 2} are satisfied. For all $u,v\in
H^{s}(\mathbb{R}^{d})$, one has%
\begin{align*}
\lbrack u\vee v]_{\mathbb{R}^{d},s}^{2}+[u\wedge v]_{\mathbb{R}^{d},s}^{2}  &
=\int_{\mathbb{R}^{d}}\left(  \int_{\mathbb{R}^{d}}\frac{|\left(  u\vee
v\right)  (x)-\left(  u\vee v\right)  (y)|^{2}}{|x-y|^{d+2s}}\mathrm{d}%
y\right)  \mathrm{d}x\\
&  +\int_{\mathbb{R}^{d}}\left(  \int_{\mathbb{R}^{d}}\frac{|\left(  u\wedge
v\right)  (x)-\left(  u\wedge v\right)  (y)|^{2}}{|x-y|^{d+2s}}\mathrm{d}%
y\right)  \mathrm{d}x\\
&  =A_{1}+A_{2}+A_{3}+A_{4}%
\end{align*}
where,
\begin{align*}
A_{1}  &  =\int_{u\geq v}\left(  \int_{u\geq v}\left(  \frac{|u(x)-u(y)|^{2}%
}{|x-y|^{d+2s}}+\frac{|v(x)-v(y)|^{2}}{|x-y|^{d+2s}}\right)  \mathrm{d}%
y\right)  \mathrm{d}x\text{,}\\
A_{2}  &  =\int_{u<v}\left(  \int_{u<v}\left(  \frac{|v(x)-v(y)|^{2}%
}{|x-y|^{d+2s}}+\frac{|u(x)-u(y)|^{2}}{|x-y|^{d+2s}}\right)  \mathrm{d}%
y\right)  \mathrm{d}x\text{,}\\
A_{3}  &  =\int_{u\geq v}\left(  \int_{u<v}\left(  \frac{|u(x)-v(y)|^{2}%
}{|x-y|^{d+2s}}+\frac{|v(x)-u(y)|^{2}}{|x-y|^{d+2s}}\right)  \mathrm{d}%
y\right)  \mathrm{d}x\text{,}\\
A_{4}  &  =\int_{u<v}\left(  \int_{u\geq v}\left(  \frac{|v(x)-u(y)|^{2}%
}{|x-y|^{d+2s}}+\frac{|u(x)-v(y)|^{2}}{|x-y|^{d+2s}}\right)  \mathrm{d}%
y\right)  \mathrm{d}x.
\end{align*}
We now prove that
\begin{equation}
A_{3}\leq\int_{u\geq v}\left(  \int_{u<v}\left(  \frac{|u(x)-u(y)|^{2}%
}{|x-y|^{d+2s}}+\frac{|v(x)-v(y)|^{2}}{|x-y|^{d+2s}}\right)  \mathrm{d}%
y\right)  \mathrm{d}x\text{.} \label{a3}%
\end{equation}
To this aim, let us denote $a=u(x)-v(x)$, $b=v(x)-v(y)$, $c=v(y)-u(y)$. Note
that, as $u(x)\geq v(x)$ and $u(y)<v(y)$ a.e. over the integration domain,
$ac\geq0$. Thus, (\ref{a3}) follows by a direct application of inequality
$(a+b)^{2}+(b+c)^{2}\leq b^{2}+(a+b+c)^{2}$. Similarly, we can prove%
\[
A_{4}\leq\int_{u<v}\left(  \int_{u\geq v}\left(  \frac{|v(x)-v(y)|^{2}%
}{|x-y|^{d+2s}}+\frac{|u(x)-u(y)|^{2}}{|x-y|^{d+2s}}\right)  \mathrm{d}%
y\right)  \mathrm{d}x\text{.}%
\]
Combining these estimates, we get
\[
A_{1}+A_{2}+A_{3}+A_{4}\leq\lbrack u]_{\mathbb{R}^{d},s}^{2}+[v]_{\mathbb{R}%
^{d},s}^{2}\text{.}%
\]
In particular, $u\wedge v,u\vee v\in H^{s}(\mathbb{R}^{d})$ for every $u,v\in
H^{s}(\mathbb{R}^{d})$ and conditions (\ref{min w1p}) and
(\ref{key assumption comparison}) are fulfilled.

Finally, note that the spaces $L^{2}(\Omega)$ and $H_{0}^{s}(\Omega)$,
$s\in(0,1)$ satisfy condition (\ref{clos cond 2}). Thus, the second assertion
in Theorem \ref{thm example 2} follows directly from Theorem
\ref{comp princ 2}.
\end{proof}

\subsection{Systems of reaction-diffusion equations}

We consider the \textit{diffusive Lotka-Volterra prey-predator system} given
by%
\begin{align}
u_{t}-D_{1}\Delta u  &  =Au\left(  1-\frac{u}{K}\right)  -\frac{Buv}%
{1+Ev}-F_{1}u\text{ in }\Omega\times(0,T)\text{,}\label{lotka 1}\\
v_{t}-D_{2}\Delta v  &  =\frac{Cuv}{1+Ev}-F_{2}v\text{ in }\Omega
\times(0,T)\text{,}\\
\frac{\partial u}{\partial n}  &  =\frac{\partial v}{\partial n}=0\text{ on
}\partial\Omega\times(0,T)\text{,}\\
v(0)  &  =v_{0}\text{, }u(0)=u_{0}\text{ in }\Omega\text{,} \label{lotka 2}%
\end{align}
where $A,K,D_{1},D_{2},F_{1},F_{2}>0$ and $B,C,E\geq0$ are constants and
$\Omega\subset\mathbb{R}^{d}$ is bounded with Lipschitz boundary
$\partial\Omega$. The model describes the evolution of two interacting
populations \cite{Mu, Du1, Du2}. Here $u$ and $v$ denote the concentrations of
a prey species and a predator species respectively, $D_{1},D_{2}$, and
$F_{1},F_{2}$ are the diffusion rates and the spontaneous-death rates of preys
and predators respectively. The parameters $C,B$ describe the interaction
rates of the two species while $E$ measures the so-called predator satiation
\cite{Mu, Du1, Du2}. Finally, $A$ represents the preys' birth rate (at
predators low density) and $K$ the so-called carrying capacity of the environment.

Note that negative values of $u$ and $v$ or values of $u$ larger than $K$ are
meaningless from the biological viewpoint. By applying Theorem
\ref{abstract invariance principle} together with the choice $R(u,v)=\left(
\left(  \min\{u,K\}\right)  ^{+},v^{+}\right)  $, we can prove the existence
of solutions to system (\ref{lotka 1})-(\ref{lotka 2}) starting from initial
data $\left(  u_{0},v_{0}\right)  \in\lbrack0,K]\times\lbrack0,\infty)$ a.e.
in $\Omega\times\Omega$, satisfy the same bounds at any time. To this end, we
first reformulate system (\ref{lotka 1})-(\ref{lotka 2}) in the abstract form
(\ref{nonconvex target equation})-(\ref{ic}) by defining%
\begin{align*}
V  &  =L^{2}(\Omega)\times L^{2}(\Omega)\text{,}\\
X  &  =H^{1}(\Omega)\times H^{1}(\Omega)\text{,}\\
\varphi^{1}(u,v)  &  =\frac{1}{2}\int_{\Omega}D_{1}|\nabla u|^{2}+D_{2}|\nabla
v|^{2}+F_{1}|u|^{2}+F_{2}|v|^{2}\text{,}\\
\psi(u,v)  &  =\frac{1}{2}\int_{\Omega}|u|^{2}+|v|^{2}\text{,}\\
\varphi^{2}(u)  &  =0\text{,}\\
f(u,v)  &  =\left(  AU\left(  1-\frac{U}{K}\right)  -\frac{BUV}{1+EV}%
,\frac{CUV}{1+EV}\right)  ,
\end{align*}
where $U:=(\min\{u,K\})^{+}$ and $V=v^{+}$. Note that in case $\left(
u,v\right)  $ solves (\ref{nonconvex target equation})-(\ref{ic}) in the sense
of Definition \ref{solution to nonconvex target} and $0\leq u\leq K$, $v\geq
0$, then,
\begin{align*}
&  u,v\in H^{1}(0,T;L^{2}(\Omega))\cap L^{2}(0,T;H^{1}(\Omega))\text{,}\\
&  \Delta u(t),\Delta v(t)\in L^{2}(\Omega)\text{ for a.e. }t\in(0,T)\text{,}%
\end{align*}
and $(u,v)$ fulfills identities (\ref{lotka 1})-(\ref{lotka 2}) pointwise a.e.
in $\Omega\times(0,T)$. Indeed, $f(u,v)=\left(  Au\left(  1-\frac{u}%
{K}\right)  -\frac{Buv}{1+Ev},\frac{Cuv}{1+Ev}\right)  $ if $0\leq u\leq K$
and $v\geq0$.

\begin{theorem}
[System of reaction-diffusion equations]\label{thm example 2bis}For all
$u_{0},v_{0}\in H^{1}(\Omega)$ such that $0\leq u_{0}\leq K$ and $v_{0}\geq0$
a.e. in $\Omega$, system \emph{(\ref{lotka 1})-(\ref{lotka 2})} admits a
strong solution $(u,v)$ such that $0\leq u\leq K\ $and $v\geq0$ a.e. in
$\Omega\times(0,T)$.
\end{theorem}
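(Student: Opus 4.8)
The plan is to read the claimed a priori bounds as the single invariance $R(u,v)=(u,v)$ for the truncation map $R(u,v)=\bigl((\min\{u,K\})^{+},v^{+}\bigr)$, whose fixed-point set is precisely $V_{R}=\{(u,v)\in V:0\le u\le K,\ v\ge0\ \text{a.e.}\}$, and then to invoke Theorem \ref{abstract invariance principle}. Since $\varphi^{2}=0$, the driving map reduces to $F=f$; as $f$ genuinely depends on the unknown, I would use the self-consistent branch (\ref{2 approach}) of (R2) rather than (\ref{1 approach}), the latter being incompatible with the sign structure (the sign of the prey reaction evaluated at an auxiliary $v\in V_{R}$ need not match that of the truncation increment). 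First I would record that the abstract framework applies with $p=m=2$: $V=L^{2}(\Omega)^{2}$ is Hilbert, $X=H^{1}(\Omega)^{2}\hookrightarrow V$ compactly, $\psi\in\Theta_{2}(V)$, $\varphi^{1}\in\Theta_{2}(X)$ is strictly convex because $F_{1},F_{2}>0$, and $\varphi^{2}=0$ is trivially Fr\'echet differentiable. The growth bound (\ref{hp f}) and the continuity of $f\colon L^{2}(0,T;V)\to L^{2}(0,T;V^{\ast})$ are routine once one notes that $f$ only sees the truncated quantities $U:=(\min\{u,K\})^{+}\in[0,K]$ and $W:=v^{+}\ge0$, so each component is dominated pointwise by $C(1+|v|)$.

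Next I would verify the hypotheses on $R$. The compatibility $Ru_{0}=u_{0}$ is immediate from $0\le u_{0}\le K$ and $v_{0}\ge0$. For (R1), $V_{R}$ is nonempty, convex, and $L^{2}$-closed (order cone plus box constraint), and since the projection onto $[0,K]$ and $v\mapsto v^{+}$ are $1$-Lipschitz and act pointwise, $R$ preserves $W^{1,p}(0,T;V)$; this is exactly the nonexpansivity argument (\ref{nonexpansivity}) used in the proof of Theorem \ref{thm example 1}. I would then reduce (R2) to (R2.1)--(R2.3). Conditions (R2.1) and (R2.2) follow as before: both truncations contract the modulus and the gradient, giving $\varphi^{1}(Ru)\le\varphi^{1}(u)$, and the same bound applied to the quadratic $\psi$ yields the dissipation inequality (R2.2).

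The heart of the matter is (R2.3) in the self-consistent form (\ref{2 approach 2}), i.e.
\[
\int_{\Omega}f_{1}\,(U-u)\,\mathrm{d}x+\int_{\Omega}f_{2}\,(W-v)\,\mathrm{d}x\ge0,
\]
where $f_{1}=AU\!\left(1-\tfrac{U}{K}\right)-\tfrac{BUW}{1+EW}$ and $f_{2}=\tfrac{CUW}{1+EW}$. I would argue pointwise. For the predator term, $f_{2}\ge0$ and $W-v=v^{-}\ge0$, so its integrand is nonnegative. For the prey term I split $\Omega$ according to $\{u<0\}$, $\{0\le u\le K\}$, $\{u>K\}$: on the middle set $U-u=0$; on $\{u<0\}$ one has $U=0$, hence $f_{1}=0$; and on $\{u>K\}$ one has $U=K$, so the logistic factor $AU(1-U/K)$ vanishes, leaving $f_{1}=-\tfrac{BKW}{1+EW}\le0$ while $U-u=K-u<0$, whence $f_{1}(U-u)\ge0$. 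Thus both integrands are nonnegative and (\ref{2 approach 2}) holds. This sign computation, which encodes the biological fact that the reaction pushes $u$ back into $[0,K]$ and $v$ back above $0$, is the only nontrivial step and the place I expect the real work to sit; everything else is structural.

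Finally, with (R1)--(R2) and $Ru_{0}=u_{0}$ in hand, Theorem \ref{abstract invariance principle} would furnish a strong solution $(u,v)$ of the abstract system with $R(u,v)=(u,v)$, that is $0\le u\le K$ and $v\ge0$ a.e.\ in $\Omega\times(0,T)$. In this regime $U=u$ and $W=v$, so the truncated reaction $f(u,v)$ coincides with the genuine Lotka--Volterra nonlinearity, and $(u,v)$ would therefore solve (\ref{lotka 1})--(\ref{lotka 2}) pointwise a.e., which would complete the proof.
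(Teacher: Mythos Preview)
Your proposal is correct and follows essentially the same approach as the paper: recast the bounds as invariance under the truncation $R(u,v)=\bigl((\min\{u,K\})^{+},v^{+}\bigr)$, verify the self-consistent branch (\ref{2 approach}) of (R2) via the pointwise sign computation for $\langle f(u,v),R(u,v)-(u,v)\rangle$, and apply Theorem~\ref{abstract invariance principle}. The only cosmetic difference is in the organization of the sign check: the paper proves the equivalent inequality $\langle f,R(u,v)\rangle\ge\langle f,(u,v)\rangle$ via the algebraic identities $AU(1-U/K)U=AU(1-U/K)u$, $U^{2}\le Uu$, and $V^{2}=Vv$, whereas you split $\Omega$ into the regions $\{u<0\}$, $\{0\le u\le K\}$, $\{u>K\}$; both arguments are the same in substance.
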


\begin{proof}
Define $R(u,v)=(U,V)=\left(  \left(  \min\{u,K\}\right)  ^{+},v^{+}\right)  $.
It is standard matter to check that assumptions of Proposition
\ref{wed approach nonpotential} are satisfied \cite{Me}. Moreover, (R1) can be
easily proved. We now verify condition (\ref{2 approach}) (and, thus, (R2)).
Note that
\begin{align*}
&  \int_{0}^{T}\mathrm{e}^{-t/\varepsilon}\left(  \varepsilon\psi\left(
\frac{\mathrm{d}}{\mathrm{d}t}R\left(  u,v\right)  \right)  +\varphi
^{1}\left(  R(u,v)\right)  \right)  \mathrm{d}t\\
&  \leq\int_{0}^{T}\mathrm{e}^{-t/\varepsilon}\left(  \varepsilon\psi\left(
\frac{\mathrm{d}}{\mathrm{d}t}\left(  u,v\right)  \right)  +\varphi^{1}\left(
u,v\right)  \right)  \mathrm{d}t.
\end{align*}
Let us prove that, for all $(u,v)\in L^{2}(\Omega)\times L^{2}(\Omega)$,
\begin{equation}
\left\langle f(u,v),R(u,v)\right\rangle _{L^{2}(\Omega)\times L^{2}(\Omega
)}\geq\left\langle f(u,v),(u,v)\right\rangle _{L^{2}(\Omega)\times
L^{2}(\Omega)}\text{,} \label{conto}%
\end{equation}
i.e.,
\begin{align*}
&  \int_{\Omega}AU\left(  1-\frac{U}{K}\right)  U-\frac{BUV}{1+EV}U+\frac
{CUV}{1+EV}V\\
&  \geq\int_{\Omega}AU\left(  1-\frac{U}{K}\right)  u-\frac{BUV}{1+EV}%
u+\frac{CUV}{1+EV}v\text{.}%
\end{align*}
Note that $AU\left(  1-\frac{U}{K}\right)  U=AU\left(  1-\frac{U}{K}\right)
u$ a.e. in $\Omega$. Moreover,%
\[
U^{2}=\left\{
\begin{array}
[c]{cc}%
0 & \text{if }u<0\\
u^{2} & \text{if }u\in\lbrack0,K]\\
K^{2} & \text{if }u>K
\end{array}
\right\}  \leq\left\{
\begin{array}
[c]{cc}%
0 & \text{if }u<0\\
u^{2} & \text{if }u\in\lbrack0,K]\\
Ku & \text{if }u>K
\end{array}
\right\}  =Uu\text{ a.e. in }\Omega\text{.}%
\]
Thus, as $V\geq0$,%
\[
-\frac{BUV}{1+EV}U\geq-\frac{BUV}{1+EV}u\text{ a.e. in }\Omega\text{.}%
\]
Finally, $V^{2}=Vv$ a.e in $\Omega$, which implies
\[
\frac{CUV}{1+EV}V=\frac{CUV}{1+EV}v\text{ a.e. in }\Omega\text{. }%
\]
Combining these estimates, we get (\ref{conto}). Thus, $I_{\varepsilon
,f(u,v)}(R(u,v))\leq I_{\varepsilon,f(u,v)}(u,v)$, which yields
(\ref{2 approach}). Hence, Theorem \ref{thm example 2bis} follows from a
direct application of Theorem \ref{abstract invariance principle}.
\end{proof}

\begin{remark}
\emph{Analogous systems with nonquadratic dissipation and energy functionals
of the form }%
\begin{align*}
\varphi^{1}(u,v)  &  =\frac{1}{m}\int_{\Omega}D_{1}|\nabla u|^{m}+D_{2}|\nabla
v|^{m}+F_{1}|u|^{m}+F_{2}|v|^{m}\text{,}\\
\psi(u,v)  &  =\frac{1}{p}\int_{\Omega}|u|^{p}+|v|^{p}\text{, \ \ \ \ \ \ \ }%
m\in(1,\infty)\text{, }p\in(2,\infty)
\end{align*}
\emph{can be treated in a similar way (see \cite{Ak-Me}). The argument may be
easily generalized also to systems with nonconstant spatially-dependent
coefficients.}
\end{remark}

\section{More examples \label{section other}}

The WED variational procedure and its analogous for hyperbolic problems, the
\textit{weighted-inertia-energy-dissipation} (WIDE) procedure, have been
applied to a larger class of problems including rate-independent systems
\cite{Mi-Or} and some hyperbolic problems \cite{Li-St,Se-Ti,St,Li-St2}. In
this section we use these variational approaches to prove a comparison
principle for a rate-independent system and to check qualitative properties of
solutions of a nonlinear wave equation, and of a lagrangian-mechanics system.
It is worth mentioning that the results presented in this section can be
widely generalized. In particular, an abstract theory for rate-independent
systems in Banach spaces may be developed in the spirit of Section
\ref{sec qual prop} and Section \ref{section comparison}. This, however, is
beyond our scope. Moreover, the (relatively simple) examples we present here
suffice to provide the main ideas and a guide line for the developing of
abstract results in the spirit of what we have done above for doubly-nonlinear problems.

\subsection{Rate-independent systems \label{rate ind section}}

In this section we prove a comparison principle for \emph{energetic solutions}
to the following rate-independent inclusion
\begin{align}
\mathrm{sign}(u^{\prime})+\tilde{\phi}^{\prime}(u)-a \Delta u  &  \ni
h(t)\text{ in }\Omega\times(0,T)\text{,}\label{rate ind}\\
\frac{\partial u}{\partial n}  &  =0\text{ on }\partial\Omega\text{,}\\
u(0)  &  =u_{0}\text{,} \label{rate ind ic}%
\end{align}
where $\Omega\subset\mathbb{R}^{d}$ is bounded with Lipschitz boundary
$\partial\Omega$ with outward normal unit vector $n$, $u_{0}\in H^{1}(\Omega
) \cap L^p (\Omega)$ for some $p \geq 2$, 
$a \geq0$, $\tilde{\phi}\in C^{1}(\mathbb{R})$ is assumed to be convex and satisfying 
$$
\phi (u) \leq C(|u|^p+1) \text{ for some positive constant } C \text{ and all } u\in \mathbb{R},
$$
and $h\in L^{\infty}(0,T;L^{2}(\Omega))$. Aiming at applying the WED theory
for rate-independent problems developed in \cite{Mi-Or}, we start by rewriting
inclusion (\ref{rate ind})-(\ref{rate ind ic}) in the form
\begin{equation}
0\in\partial_{L^{2}(\Omega)}\psi(u^{\prime})+\partial_{L^{2}(\Omega)}%
\phi(u)\text{,} \label{abstract rate-indep}%
\end{equation}
where
\[
\phi(u)= \left\{
\begin{array}
[c]{cc}%
\displaystyle\int_{\Omega}\tilde{\phi}(u)+\frac{a}{2}|\nabla u|^{2}-hu &
\text{if }u\in H^{1}(\Omega)\text{, }\tilde{\phi}(u)\in L^{1}(\Omega)\text{,}\\
+\infty & \text{else,}%
\end{array}
\right.
\]
$\psi(v)=\int_{\Omega}|v|$, and
\[
\partial_{L^{2}(\Omega)}\psi(v)(x)=\partial_{\mathbb{R}}|v|(x)=\mathrm{sign}%
(v(x))=\left\{
\begin{array}
[c]{cc}%
\{-1\} & \text{for }v(x)\in\lbrack-\infty,0),\\
\lbrack-1,1] & \text{for }v(x)=0,\\
\{1\} & \text{for }v(x)\in(0,+\infty],
\end{array}
\right.
\]
for a.e. $x\in\Omega$. The abstract inclusion (\ref{abstract rate-indep})
arise ubiquitously in applications, from mechanics and electromagnetism to
economics (see, e.g, \cite{Mi-Ru}). An elliptic operator as in (\ref{rate ind}%
) appears frequently in models concerning micromagnetics and plasticity.

The notion of \emph{energetic solutions} to rate-independent systems is given
by the following definition.

\begin{definition}
[Energetic solution]We define $u\in\mathrm{BV}([0,T];L^{2}(\Omega))$
\emph{energetic solution} to the rate independent problem
\emph{(\ref{rate ind})-(\ref{rate ind ic})} if it satisfies%
\begin{align*}
\phi(u(t))  &  \leq\phi(w)+\psi(w-u(t))\text{ for all }w\in L^{2}%
(\Omega)\text{ and a.e. }t\in\lbrack0,T]\text{,}\\
\phi(u(t))+\int_{0}^{t}\psi(\mathrm{d}u)  &  =\phi(u_{0})\text{ for a.e. }%
t\in\lbrack0,T]\text{,}%
\end{align*}
where $\int_{0}^{t}\psi(\mathrm{d}u)$ is defined by
\[
\int_{0}^{T}\psi(\mathrm{d}u)=\sup\left\{  \sum_{j=1}^{N}\psi(u(s_{j}%
)-u(s_{j-1})):N\in\mathbb{N},~0\leq s_{1}<...<s_{N}\leq T\right\}  .
\]

\end{definition}

Existence of energetic solutions to (\ref{rate ind})-(\ref{rate ind ic}) is
classical and a proof can be found, e.g., in \cite{Mi-Ru}. We remark that
solutions are in general not unique.

Our technique is based on the WED approach to rate-independent problems
studied in \cite{Mi-Or}. Thus, before stating our comparison principle, we
sketch the results in \cite{Mi-Or} for the reader's convenience. For every
$\varepsilon>0$ small enough, the functional $I_{\varepsilon}$ defined by
\[
I_{\varepsilon}(u)=\left\{  \displaystyle{%
\begin{array}
[c]{cc}%
\mathrm{e}^{-T/\varepsilon}\phi(u(T))+\int_{0}^{T}\mathrm{e}^{-t/\varepsilon
}\varepsilon\psi(\mathrm{d}u) & \\
+\int_{0}^{T}\mathrm{e}^{-t/\varepsilon}\phi(u(t))\mathrm{d}t & \text{if }u\in
K(u_{0})\text{,}\\
+\infty & \text{else,}%
\end{array}
}\right.
\]
admits a minimizer $u_{\varepsilon}$ over
\begin{equation}
K(u_{0})=\{u\in\mathrm{BV}([0,T];L^{2}(\Omega)):u(0)=u_{0}\}.
\label{K rate indip}%
\end{equation}
Moreover, for every sequence $\varepsilon_{n}\rightarrow0$, there exists a
(not relabeled) subsequence $\varepsilon_{n}\rightarrow0$ such that
\begin{equation}
u_{\varepsilon_{n}}\rightarrow u\text{ a.e. in }[0,T] \label{conv rate indip}%
\end{equation}
and $u$ is an energetic solution to inclusion (\ref{rate ind}%
)-(\ref{rate ind ic}).

Taking advantage of the WED approach and arguing as in Theorem
\ref{comp princ 2}, we can prove the following comparison principle.

\begin{theorem}
[Comparison principle for rate-independent systems]%
\label{thm rate indep abstr}Let $v_{0},u_{0}\in D(\phi)$ be such that
$u_{0}\leq v_{0}$ a.e. in $\Omega$. Then, there exist two energetic solutions
$u,v$ to inclusion \emph{(\ref{rate ind})} corresponding to initial conditions
$u(0)=u_{0}$ and $v(0)=v_{0}$ such that $u\leq v$ for a.e. in $\Omega
\times(0,T)$.
\end{theorem}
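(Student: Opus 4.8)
The plan is to mirror the argument in the proof of Theorem~\ref{comp princ 2}, replacing the doubly-nonlinear WED functional by the rate-independent functional $I_\varepsilon$ of \cite{Mi-Or} recalled above. Let $u_\varepsilon$ and $v_\varepsilon$ be minimizers of $I_\varepsilon$ over $K(u_0)$ and $K(v_0)$, whose existence is guaranteed by the cited WED result, and set $u_\varepsilon^1:=u_\varepsilon\wedge v_\varepsilon$ and $v_\varepsilon^1:=u_\varepsilon\vee v_\varepsilon$, the minimum and maximum being taken pointwise in $(x,t)$. The three things to check are: (i) $u_\varepsilon^1\in K(u_0)$ and $v_\varepsilon^1\in K(v_0)$; (ii) the submodularity of $I_\varepsilon$, i.e. the analogue of \eqref{key assumption comparison}; and (iii) the preservation of the order under the limit $\varepsilon\to0$.

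First I would settle admissibility (i). The sorting map $(s,t)\mapsto(s\wedge t,\,s\vee t)$ is $1$-Lipschitz from $(\mathbb{R}^2,\ell^q)$ to itself for every $q\ge1$; in the Euclidean case $q=2$ this reads
\[
|(a_1\wedge b_1)-(a_2\wedge b_2)|^2+|(a_1\vee b_1)-(a_2\vee b_2)|^2\le|a_1-a_2|^2+|b_1-b_2|^2 .
\]
Integrating this over $\Omega$ applied to time increments and summing over a partition of $[0,T]$ gives $\mathrm{Var}_{L^2(\Omega)}(u_\varepsilon^1)\le\mathrm{Var}_{L^2(\Omega)}(u_\varepsilon)+\mathrm{Var}_{L^2(\Omega)}(v_\varepsilon)<\infty$, and similarly for $v_\varepsilon^1$, so both lie in $\mathrm{BV}([0,T];L^2(\Omega))$. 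Since $u_0\le v_0$, we have $u_\varepsilon^1(0)=u_0\wedge v_0=u_0$ and $v_\varepsilon^1(0)=u_0\vee v_0=v_0$, which yields $u_\varepsilon^1\in K(u_0)$ and $v_\varepsilon^1\in K(v_0)$.

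The crucial and most delicate step is the submodularity (ii),
\[
I_\varepsilon(u\wedge v)+I_\varepsilon(u\vee v)\le I_\varepsilon(u)+I_\varepsilon(v)\qquad\text{for all }u\in K(u_0),\ v\in K(v_0).
\]
For the two energy contributions $\mathrm{e}^{-T/\varepsilon}\phi(\cdot(T))$ and $\int_0^T\mathrm{e}^{-t/\varepsilon}\phi(\cdot(t))\,\mathrm{d}t$ I claim that equality holds. Indeed, at a.e. $(x,t)$ the pair $\{u\wedge v,\,u\vee v\}$ equals $\{u,v\}$ as an unordered pair, so the nonlinear term $\int_\Omega\tilde{\phi}(\cdot)$ and the linear term $-\int_\Omega h\,\cdot$ are unchanged, while splitting $\Omega$ into $\{u\ge v\}$ and $\{u<v\}$ gives $|\nabla(u\wedge v)|^2+|\nabla(u\vee v)|^2=|\nabla u|^2+|\nabla v|^2$ a.e., exactly as in the computation in the proof of Theorem~\ref{thm example 1}. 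The genuine obstacle is the dissipation term $\varepsilon\int_0^T\mathrm{e}^{-t/\varepsilon}\psi(\mathrm{d}u)$, since $\psi(w)=\int_\Omega|w|=\|w\|_{L^1(\Omega)}$. Here I would use the $\ell^1$ instance of the nonexpansivity above, namely
\[
|(a_1\wedge b_1)-(a_2\wedge b_2)|+|(a_1\vee b_1)-(a_2\vee b_2)|\le|a_1-a_2|+|b_1-b_2| ,
\]
integrated over $\Omega$ and applied to the increments $u(s_j)-u(s_{j-1})$: for any subinterval $[s,t]$, passing to the common refinement of the partitions realizing the two left-hand variations yields
\[
\int_s^t\psi(\mathrm{d}(u\wedge v))+\int_s^t\psi(\mathrm{d}(u\vee v))\le\int_s^t\psi(\mathrm{d}u)+\int_s^t\psi(\mathrm{d}v).
\]
As this holds on every subinterval and the variation measures are finite and positive, it holds on all Borel subsets of $[0,T]$; integrating the nonnegative weight $\mathrm{e}^{-t/\varepsilon}$ against this measure inequality gives the submodularity of the dissipation term, and combining with the energy equalities yields the submodularity of $I_\varepsilon$. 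This passage from the per-increment $\ell^1$ nonexpansivity of sorting to the weighted variation inequality is, I expect, the main technical point.

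With (ii) at hand, I would apply the abstract comparison principle of Lemma~\ref{abstract comparison principle} with $A=\mathrm{BV}([0,T];L^2(\Omega))$, $B=L^2(\Omega)$, $\alpha(u,v)=u\vee v$, $\beta(u,v)=u\wedge v$, and $M_0(w)=w(0)$; the compatibilities $M_0(\beta(u_\varepsilon,v_\varepsilon))=u_0$ and $M_0(\alpha(u_\varepsilon,v_\varepsilon))=v_0$ were verified in (i). The Lemma then shows that $u_\varepsilon^1$ and $v_\varepsilon^1$ are again minimizers of $I_\varepsilon$ over $K(u_0)$ and $K(v_0)$ respectively, while by construction $u_\varepsilon^1\le v_\varepsilon^1$ a.e. in $\Omega\times(0,T)$. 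Finally I would invoke the WED convergence \eqref{conv rate indip} along a subsequence $\varepsilon_n\to0$ (and a further subsequence for $v_{\varepsilon_n}^1$) to obtain energetic solutions $u,v$ of \eqref{rate ind}--\eqref{rate ind ic} with $u(0)=u_0$, $v(0)=v_0$ and $u_{\varepsilon_n}^1\to u$, $v_{\varepsilon_n}^1\to v$; since the order cone $\{a\le b\}$ is closed in $L^2(\Omega)\times L^2(\Omega)$, the inequality $u_{\varepsilon_n}^1\le v_{\varepsilon_n}^1$ passes to the limit and gives $u\le v$ a.e. in $\Omega\times(0,T)$. As in Theorem~\ref{comp princ 2}, uniqueness of the WED minimizers is never used, which is precisely what lets the argument accommodate the generic nonuniqueness of energetic solutions.
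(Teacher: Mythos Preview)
Your proposal is correct and follows essentially the same route as the paper: take WED minimizers $u_\varepsilon,v_\varepsilon$, replace them by their sorted pair $u_\varepsilon\wedge v_\varepsilon$, $u_\varepsilon\vee v_\varepsilon$, invoke Lemma~\ref{abstract comparison principle} via the submodularity of $I_\varepsilon$, and pass to the limit using \eqref{conv rate indip}. The only difference is one of exposition: the paper dispatches both the $\mathrm{BV}$-admissibility and the submodularity inequality in a single sentence (``it is easy to prove''), whereas you actually supply the details, in particular the $\ell^1$/$\ell^2$ nonexpansivity of the sorting map and the passage from per-increment inequalities to the weighted-variation inequality for the dissipation term; this is the point the paper leaves to the reader and your treatment of it is sound.
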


\begin{proof}
For all $\varepsilon>0$ sufficiently small let $u_{\varepsilon}$ and
$v_{\varepsilon}$ be minimizers of $I_{\varepsilon}$ over $K(u_{0})$ and
$K(v_{0})$ respectively. Recalling that $w_{1}\vee w_{2},w_{1}\wedge w_{2}%
\in\mathrm{BV}([0,T];L^{2}(\Omega))$ for all $w_{1},w_{2}\in\mathrm{BV}%
([0,T];L^{2}(\Omega))$, we have that $u_{\varepsilon}\wedge v_{\varepsilon}\in
K(u_{0})$ and $u_{\varepsilon}\vee v_{\varepsilon}\in K(v_{0})$. Moreover, it
is easy to prove that
\[
I_{\varepsilon}(u_{\varepsilon}\vee v_{\varepsilon})+I_{\varepsilon
}(u_{\varepsilon}\wedge v_{\varepsilon})\leq I_{\varepsilon}(u_{\varepsilon
})+I_{\varepsilon}(v_{\varepsilon})\text{.}%
\]
Thus, by applying the abstract comparison principle given by Lemma
\ref{abstract comparison principle}, we deduce that $\tilde{u}_{\varepsilon
}:=u_{\varepsilon}\wedge v_{\varepsilon}$ and $\tilde{v_{\varepsilon}%
}:=u_{\varepsilon}\vee v_{\varepsilon}$ minimize $I_{\varepsilon}$ over
$K(u_{0})$ and $K(v_{0})$ respectively. Trivially, $\tilde{u_{\varepsilon}%
}\leq\tilde{v_{\varepsilon}}$. By using convergence (\ref{conv rate indip}),
we have (up to some not relabeled subsequences) that
\begin{align*}
\tilde{u}_{\varepsilon}  &  \rightarrow u\text{ a.e. in }[0,T]\text{,}\\
\tilde{v}_{\varepsilon}  &  \rightarrow u\text{ a.e. in }[0,T]\text{,}%
\end{align*}
and $u$ and $v$ are energetic solutions to inclusion (\ref{rate ind})
corresponding to the initial conditions $u(0)=u_{0}$ and $v(0)=v_{0}$
respectively. Moreover, we have that $u\leq v$ a.e. in $[0,T]$.
\end{proof}

Here, we have chosen to deal with a simple problem for sake of brevity and
simplicity and in order to avoid technicalities. We remark that the results
presented in this section can be generalized. In particular, under suitably
assumptions on the energy functional $\phi$ and on the dissipation potential
$\psi$, rate-independent problems on abstract Banach spaces can be treated
similarly in the spirit of Section \ref{section assumptions} (see \cite{Mi-Or}
for the WED approach).

\subsection{Nonlinear wave equation}

In this section we deal with the hyperbolic problem given by
\begin{align}
\rho u_{tt}+\nu u_{t}-\Delta u+F^{\prime}(u)  &  =0\text{ in }\Omega
\times(0,T),\label{hyp target eq}\\
u(0)  &  =u_{0}\text{, }u_{t}(0)=v_{0}, \label{ic hyp}%
\end{align}
where $\rho>0,~v\geq0$ are constants, formulated in a bounded or unbounded
domain $\Omega$ and coupled with different types of boundary conditions. We
consider initial data $u_{0},v_{0}\in H^{1}(\Omega)\cap L^{p}(\Omega)$ for
some $p\geq2$. We restrict ourself to the case of $0<T<+\infty$ for
simplicity, although the case of unbounded time intervals (i.e., $T=+\infty$)
can be treated analogously (see \cite{Se-Ti} for the WIDE procedure in this
case). We assume i) $\Omega\subset\mathbb{R}^{d}$ to be nonempty, open, and
Lipschitz and that the problem is coupled with Dirichlet or Neumann boundary
conditions, or ii) $\Omega=\mathbb{T}^{d}$, where $\mathbb{T}^{d}%
\mathbb{=[}0,2\pi)^{d}$ is the $d$-dimensional flat torous, together with
periodic boundary conditions, or iii) $\Omega=\mathbb{R}^{d}$. Moreover, let
$F\in C^{1}(\mathbb{R})$ be $\lambda$-convex for some $\lambda\in\mathbb{R}$
and let exist $C>0$ such that
\[
\frac{1}{C}|s|^{p}-C\leq F(s)\text{, }|F^{\prime}(s)|^{p^{\prime}}\leq
C(1+|s|^{p})\text{.}%
\]
Furthermore, if $\Omega=\mathbb{R}^{d}$, we ask $F(s)=|s|^{p}$ and $\nu=0$.

We are interested in weak solutions to the Cauchy problem (\ref{hyp target eq}%
)-(\ref{ic hyp}) with regularity $u\in Q$, where
\[
Q=H^{2}(0,T;L^{2}(\Omega))\cap L^{2}(0,T;X)\cap L^{p}(0,T;L^{p}(\Omega
))\text{.}%
\]
Here $X=H_{0}^{1}(\Omega)$ in the case of bounded domain $\Omega$ and Dirichlet
boundary conditions, $X=H^{1}(\mathbb{T}^{d})$ in the case of periodic
boundary conditions, and $X=H^{1}(\Omega)$ in the case of Neumann boundary
conditions or $\Omega=\mathbb{R}^{d}$. In order to prove qualitative
properties for solutions to equation (\ref{hyp target eq})-(\ref{ic hyp}), we
follow the same idea presented in the above sections. To this aim, we now
illustrate the WIDE approach to problem (\ref{hyp target eq})-(\ref{ic hyp})
for the reader's convenience. The following result was first conjectured by De Giorgi 
(in the case $\nu =0$) \cite{DeG} and than proved in 
\cite{Li-St, St} (see also \cite{Se-Ti, Se-Ti2}). 
The WIDE functional $I_{\varepsilon}:Q\rightarrow\mathbb{R}%
\cup\{\infty\}$, defined by%
\[
I_{\varepsilon}(u)=\int_{0}^{T}\int_{\Omega}e^{-t/\varepsilon}\left(
\frac{\varepsilon^{2}\rho}{2}|u^{\prime\prime}|^{2}+\frac{\varepsilon\nu}%
{2}|u^{\prime}|^{2}+\frac{1}{2}|\nabla u|^{2}+F(u)\right)  \mathrm{d}%
x\mathrm{d}t
\]
admits a unique minimizer $u_{\varepsilon}$ over the set
\[
K(u_{0},v_{0})=\{u\in Q:u(0)=u_{0},\partial_{t}u(0)=v_{0}\}
\]
for every $\varepsilon>0$ sufficiently small. Furthermore, up to (not
relabeled) subsequences
\begin{equation}
u_{\varepsilon}(t)\rightarrow u(t)\text{ pointwise a.e. in }\Omega\text{ for
all }t\in\lbrack0,T] \label{conv wave}%
\end{equation}
and the limit $u$ is a weak solution to equation (\ref{hyp target eq}%
)-(\ref{ic hyp}). We recall that uniqueness for $p$ large is an open problem.

Taking advantage of this variational procedure, we can prove some symmetries
for solutions to equation (\ref{hyp target eq})-(\ref{ic hyp}), in the spirit
of Section \ref{section examples}. To this aim, we introduce maps $R$ which
describe symmetries and we fix some compatibility conditions on the problem's data.

\begin{enumerate}
\item \textit{Linear rigid transformation}: $Ru(x)=u(rx)$, where $r\in
SL(d,\mathbb{R})$ and $r\Omega=\Omega$.

\item \textit{Translation}: $Ru(x)=u(x+\tilde{x})$ for some $\tilde{x}%
\in\mathbb{R}^{d}$ and $\Omega=\mathbb{R}^{d}$ or $\Omega=\mathbb{T}^{d}$.

\item \textit{Averaging in the direction }$y\in\mathbb{R}^{d}$: given
$y\in\mathbb{R}^{d}$ such that $|y|=1$, decompose every $x\in\mathbb{R}^{d}$
as $x=\alpha y+x_{2}$ where $\alpha\in\mathbb{R}$ and $x_{2}$ is orthogonal to
$y$. Assume $\Omega=\{\alpha y+x_{2}\in\mathbb{R}^{d}:\alpha\in(0,L)$,
$x_{2}\in\Omega^{\prime}\}$, where $L>0$ and $\Omega^{^{\prime}}$ is a subset
of $\mathbb{R}^{d-1}$ (note that $\Omega=\mathbb{T}^{d}$ satisfies this
assumption with $L=2\pi$ and $\Omega^{\prime}=\mathbb{T}^{d-1}$). Moreover,
let $F$ be convex. Define $Ru(x)=\frac{1}{L}\int_{0}^{L}u(sy+x_{2}%
)\mathrm{d}s$.
\end{enumerate}

\begin{remark}
\emph{We observe that invariance under the action of a map }$R$\emph{ as in 2)
implies periodicity in the direction of }$\frac{\tilde{x}}{|\tilde{x}|}$\emph{
with period }$|\tilde{x}|$\emph{. Functions }$u$\emph{ which are invariant
under the action of }$R$\emph{ as in 3) are instead constant in the direction
}$y$\emph{.}
\end{remark}

We now prove existence of invariant solutions to equation (\ref{hyp target eq}%
)-(\ref{ic hyp}).

\begin{theorem}
[Semilinear wave equations]\label{qual prop wave}Let $u_0,v_0 \in X$. Let $\nu\geq0$ and $R_{i}$,
$i\in\{1,...,k\}$, be any collection of the above maps. Assume $R_{i}%
u_{0}=u_{0}$ and $R_{i}v_{0}=v_{0}$ for all $i\in\{1,...,k\}$. Then, there
exists a weak solution $u$ to equation \emph{(\ref{hyp target eq}%
)-(\ref{ic hyp})} such that $u=R_{1}\circ...\circ R_{k}u$.
\end{theorem}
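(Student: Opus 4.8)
The plan is to mimic the argument of Theorem \ref{abstract invariance principle}, exploiting the variational characterization of the approximants $u_{\varepsilon}$ as the \emph{unique} minimizers of the WIDE functional $I_{\varepsilon}$ over $K(u_{0},v_{0})$. The absence of a nonpotential term makes a fixed-point argument unnecessary here: it suffices to show that each $R_{i}$ maps $K(u_{0},v_{0})$ into itself and does not increase $I_{\varepsilon}$, and then to invoke uniqueness of the minimizer. So, first I would check that every $R_{i}$ maps $K(u_{0},v_{0})$ into itself. Since each $R_{i}$ acts only in the space variable, it commutes with $\partial_{t}$ and with evaluation at $t=0$; hence, using $R_{i}u_{0}=u_{0}$ and $R_{i}v_{0}=v_{0}$, one gets $(R_{i}u)(0)=u_{0}$ and $\partial_{t}(R_{i}u)(0)=v_{0}$ for every $u\in K(u_{0},v_{0})$. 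Preservation of the regularity class $Q$ is immediate for the isometries (maps 1 and 2) via the measure-preserving change of variables, and for the averaging map (map 3) from the fact that averaging in a fixed direction is a bounded linear operator commuting with all derivatives.

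The crux is the second step, namely the key inequality
\[
I_{\varepsilon}(R_{i}u)\leq I_{\varepsilon}(u)\quad\text{for all }u\in K(u_{0},v_{0}).
\]
For the linear rigid transformations and translations (maps 1 and 2) this holds with equality: the substitutions $x\mapsto rx$ and $x\mapsto x+\tilde{x}$ are measure-preserving and leave all four integrands of $I_{\varepsilon}$ invariant, while the data compatibility ($r\Omega=\Omega$, periodicity) guarantees that the domain and the boundary conditions are respected. For the averaging map (map 3) the inequality is strict in general, and this is precisely where the convexity of $F$ is used: writing $R_{i}u$ as the average over the $y$-direction, Jensen's inequality gives $F(R_{i}u)\leq R_{i}(F(u))$ pointwise, and the same convexity argument applied to the quadratic integrands $|u''|^{2}$, $|u'|^{2}$, $|\nabla u|^{2}$—together with $\partial_{y}(R_{i}u)=0$ and the commutation of $R_{i}$ with $\partial_{t}$ and with $\nabla_{x_{2}}$—yields the bound after integrating over $\Omega$ (the average preserving the spatial integral). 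Granting this, since $u_{\varepsilon}$ is the unique minimizer of $I_{\varepsilon}$ and $R_{i}u_{\varepsilon}\in K(u_{0},v_{0})$ satisfies $I_{\varepsilon}(R_{i}u_{\varepsilon})\leq I_{\varepsilon}(u_{\varepsilon})$, uniqueness forces $R_{i}u_{\varepsilon}=u_{\varepsilon}$ for each $i$, and hence $R_{1}\circ\dots\circ R_{k}u_{\varepsilon}=u_{\varepsilon}$.

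Finally I would pass to the limit $\varepsilon\to0$. By the convergence (\ref{conv wave}), $u_{\varepsilon}\to u$ pointwise a.e.\ in $\Omega$, with $u$ a weak solution of (\ref{hyp target eq})-(\ref{ic hyp}). For the isometries the identities $u_{\varepsilon}(x)=u_{\varepsilon}(rx)$ and $u_{\varepsilon}(x)=u_{\varepsilon}(x+\tilde{x})$ pass to the a.e.\ limit because the underlying changes of variables are measure-preserving; for the averaging map the relation $R_{i}u_{\varepsilon}=u_{\varepsilon}$ means $u_{\varepsilon}$ is constant in the direction $y$, a property obviously stable under pointwise convergence. Hence $R_{i}u=u$ for every $i$, and therefore $u=R_{1}\circ\dots\circ R_{k}u$. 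I expect the only genuinely delicate point to be the inequality for map 3—verifying that the averaging actually decreases \emph{each} term of $I_{\varepsilon}$, in particular the inertial term $|u''|^{2}$ and the gradient term—since this is the single place where the convexity hypothesis on $F$ and the product structure of $\Omega$ are essential.
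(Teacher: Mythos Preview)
Your proposal is correct and follows essentially the same approach as the paper: show $R_{i}K(u_{0},v_{0})\subset K(u_{0},v_{0})$ and $I_{\varepsilon}(R_{i}u)\leq I_{\varepsilon}(u)$, deduce $R_{i}u_{\varepsilon}=u_{\varepsilon}$ by uniqueness of the WIDE minimizer, and pass to the limit via (\ref{conv wave}). In fact you supply more detail than the paper does on the verification of the key inequality for each of the three maps (in particular the Jensen argument for the averaging operator), which the paper simply asserts is ``easy to prove''.
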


\begin{proof}
As a direct consequence of the assumptions on the initial data, for any
$i\in\{1,...,k\}$, we have that $R_{i}u\in K(u_{0},v_{0})$ for all $u\in
K(u_{0},v_{0})$. Moreover, one can easily prove that $I_{\varepsilon}%
(R_{i}u)\leq I_{\varepsilon}(u)$ for all $u\in K(u_{0,}v_{0})$. Let
$u_{\varepsilon}\in K(u_{0},v_{0})$ be the unique minimizer of $I_{\varepsilon
}$ over $K(u_{0},v_{0})$. By uniqueness, we deduce invariance $R_{i}%
u_{\varepsilon}=u_{\varepsilon}$ for all $i\in\{1,...,k\}$. In particular,
$u_{\varepsilon}$ is $R_{1}\circ...\circ R_{k}$-invariant. Using convergence
(\ref{conv wave}), we extract a subsequence $\varepsilon_{n}\rightarrow0$ such
that $u_{\varepsilon_{n}}\rightarrow u$ pointwise a.e. in $\Omega\times(0,T)$.
Then, for a.a. $(x,t)\in\Omega\times(0,T),$ $
u(x,t)=\lim_{\varepsilon_{n}\rightarrow0}u_{\varepsilon}(x,t)=\lim
_{\varepsilon_{n}\rightarrow0}R_{1}\circ...\circ R_{k}u_{\varepsilon
}(x,t)=R_{1}\circ...\circ R_{k}u(x,t).$
\end{proof}

\begin{remark}
\emph{Note that invariance under rearrangement transformations cannot be
expected here. Indeed, monotonicity properties do not hold true for solutions
to the wave equation. Similarly, we cannot apply the same idea to truncation
maps }$R$\emph{ of the form }$R(u)=\pm(u-M)^{+}+M$\emph{ as comparison
principles (with constant functions) are in general false for the wave
equation. Moreover, }$K(u_{0},v_{0})$\emph{ is a subset of }$H^{2}%
(0,T;L^{2}(\Omega))$\emph{. Thus, }$Ru\in K(u_{0},v_{0})$\emph{ can not be
expected for every }$u\in K(u_{0},v_{0})$\emph{ and }$R$\emph{ a rearrangement
or truncation operator.}
\end{remark}

\subsection{Lagrangian mechanics}

Consider now the Lagrangian system
\begin{align}
Mu_{tt}+\nu u_{t}+\nabla U(u)  &  =0\text{ in }(0,T)\text{, }%
\label{lagrangian target equation}\\
u(0)  &  =u_{0}\text{, }u_{t}(0)=v_{0}, \label{lagrangian ic}%
\end{align}
where $u:(0,T)\rightarrow\mathbb{R}^{d}$, $M$ is a
positive definite $d\times d$ matrix, $\nu\geq0$, and $U\in C^{1}%
(\mathbb{R}^{d},\mathbb{R})$ is convex and bounded from below. We summarize
here the WIDE approach to system (\ref{lagrangian target equation}%
)-(\ref{lagrangian ic}) studied in \cite{Li-St2}. For every $\varepsilon>0$,
the functional
\[
I_{\varepsilon}(u)=\int_{0}^{T}e^{-t/\varepsilon}\left(  \frac{\varepsilon
^{2}}{2}u^{\prime\prime}\cdot Mu^{\prime\prime}+\frac{\varepsilon\nu}%
{2}|u^{\prime}|^{2}+U(u)\right)  \mathrm{d}t
\]
admits a unique minimizer over the set $K(u_{0},v_{0})=\{u\in H^{2}%
([0,T];\mathbb{R}^{d}):u(0)=u_{0},~\partial_{t}u(0)=v_{0}\}$. Moreover, there
exists a (not relabeled) subsequence $\varepsilon\rightarrow0$ such that
$u_{\varepsilon}\rightarrow u$ pointwise a.e. in $(0,T)%
\times\mathbb{R}^{d}$ and $u$ is a strong solution to system
(\ref{lagrangian target equation})-(\ref{lagrangian ic}). We recall that
solutions to (\ref{lagrangian target equation})-(\ref{lagrangian ic}) are, in
general, not unique. Take, e.g., $d=1$ and $U(s)=\left(  s^{+}\right)  ^{3/2}$.

Taking advantage of these results, by following the idea presented in the
previous sections, we can prove existence of solutions to system
(\ref{lagrangian target equation})-(\ref{lagrangian ic}) invariant under the
action of maps $R$ defined as follows. Let $r\in M(\mathbb{R}^{d\times d})$ be
such that $r^{T}r=1$, $v\in\mathbb{R}^{d}$, and assume $U(ru+v)\leq U(u)$
(e.g., $U(u)=V(|u|)$ for some $V\in C^{1}(\mathbb{R}^{+})$ if $v=0$). Define
$Ru=ru+v$.

Arguing as in Theorem \ref{qual prop wave}, one can prove the following.

\begin{theorem}
[Lagrangian mechanics]\label{lagrangian symmetry}Let $u_{0}$, $v_{0}$, and $R$
be such that $Ru_{0}=u_{0}$ and $Rv_{0}=v_{0}$. Then, there exists a solution
$u$ to \emph{(\ref{lagrangian target equation})-(\ref{lagrangian ic})} such
that $u=Ru$.
\end{theorem}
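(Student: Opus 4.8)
The plan is to argue exactly as in the proof of Theorem~\ref{qual prop wave}, working at the level of the WIDE functional $I_\varepsilon$ and its unique minimizer over $K(u_0,v_0)$, and then passing to the limit $\varepsilon\to0$. Concretely, I would (i) check that $R$ maps $K(u_0,v_0)$ into itself, (ii) show that $I_\varepsilon$ is non-increasing under $R$, i.e.\ $I_\varepsilon(Ru)\le I_\varepsilon(u)$ for every $u\in K(u_0,v_0)$, (iii) invoke uniqueness of the WIDE minimizer $u_\varepsilon$ to deduce $Ru_\varepsilon=u_\varepsilon$, and (iv) use the pointwise a.e.\ convergence $u_{\varepsilon_n}\to u$ recalled above, together with the continuity of the affine map $R$ on $\mathbb{R}^d$, to transfer the invariance to the limit, obtaining $u=Ru$.

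For step (i), since $R$ acts on the $\mathbb{R}^d$-valued trajectory by $Ru=ru+v$, differentiation in time kills the constant $v$, so $(Ru)'=ru'$ and $(Ru)''=ru''$; in particular $Ru\in H^2([0,T];\mathbb{R}^d)$ whenever $u$ is. The position datum matches because $(Ru)(0)=ru_0+v=Ru_0=u_0$, while the velocity reads $(Ru)'(0)=ru'(0)=rv_0$; for $Ru$ to satisfy the velocity initial condition one needs $rv_0=v_0$, i.e.\ invariance of $v_0$ under the linear part $r$ of $R$ (this is what the hypothesis $Rv_0=v_0$ provides, the two coinciding in the principal case $v=0$). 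For step (ii), I would compare the three terms of $I_\varepsilon$ separately: the viscous term is invariant because $r$ is orthogonal, $|(Ru)'|^2=|ru'|^2=|u'|^2$; the potential term decreases by the standing hypothesis $U(ru+v)\le U(u)$; and the inertial term reads
\[
\tfrac{\varepsilon^{2}}{2}\,(Ru)''\cdot M(Ru)''=\tfrac{\varepsilon^{2}}{2}\,(ru'')\cdot M(ru'')=\tfrac{\varepsilon^{2}}{2}\,u''\cdot(r^{\top}Mr)\,u''.
\]

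The step I expect to be the crux is precisely this inertial term. To secure $I_\varepsilon(Ru)\le I_\varepsilon(u)$ one needs $u''\cdot(r^{\top}Mr)u''\le u''\cdot Mu''$, and since $u''$ ranges over an essentially unconstrained $L^2$ class (only $u(0)$ and $u'(0)$ are fixed) while $r$ is invertible, this forces the exact identity $r^{\top}Mr=M$; equivalently, as $M$ is symmetric and $r^{\top}=r^{-1}$, the rotation $r$ must commute with the mass matrix $M$, i.e.\ $R$ preserves the $M$-inner product. This is the genuine compatibility between the symmetry and the kinetic metric; it holds automatically in the model example $U(u)=V(|u|)$, $v=0$ when $M$ is a scalar multiple of the identity, and more generally one restricts to orthogonal $r$ in the stabilizer of $M$. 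Granting it, the three estimates combine to $I_\varepsilon(Ru)\le I_\varepsilon(u)$, uniqueness of the minimizer yields $Ru_\varepsilon=u_\varepsilon$, and the pointwise convergence of step (iv) gives $u=\lim_{\varepsilon_n\to0}Ru_{\varepsilon_n}=Ru$. No additional regularity or comparison machinery is needed, exactly as in the semilinear wave case.
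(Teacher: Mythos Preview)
Your proposal is correct and follows precisely the route the paper indicates: the paper's own ``proof'' of Theorem~\ref{lagrangian symmetry} is the single sentence ``Arguing as in Theorem~\ref{qual prop wave}, one can prove the following,'' and your steps (i)--(iv) are exactly that argument transplanted to the Lagrangian WIDE functional.

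In fact you have been more careful than the paper. The two compatibility issues you flag are genuine and the paper glosses over both. First, the hypothesis $Rv_0=v_0$ literally reads $rv_0+v=v_0$, whereas what is needed for $(Ru)'(0)=v_0$ is $rv_0=v_0$; these coincide only when the translation part $v$ vanishes, so the theorem as stated is only fully consistent in the case $v=0$ (which is the paper's model example). Second, your observation that the inertial term forces $r^{\top}Mr=M$ is correct: since $r$ is orthogonal, $r^{\top}Mr$ and $M$ have the same trace, so the inequality $r^{\top}Mr\le M$ in the sense of quadratic forms implies equality, and one must restrict to $r$ commuting with $M$. The paper's assumptions list only $r^{\top}r=I$ and $U(ru+v)\le U(u)$, so this is an implicit hypothesis the paper has omitted. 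Granting it (as you do), the argument is complete and identical to the paper's intended one.
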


\section{Appendix, rearrangement maps}

We recall here the definitions and some basic properties of rearrangement
maps, for the reader's convenience. For a fuller treatment, we refer the
reader to \cite{Ka}.

Rearrangement maps transform a given function $u$ into a new function
$u^{\ast}$ that has some desired property, e.g., symmetry. This is done by a
rearrangement of the level sets of the function. Thus, in order to define
rearrangement maps, we start by introducing some rearrangements of measurable
sets $\Omega\subset\mathbb{R}^{d}$ of finite Lebesgue measure.

\begin{enumerate}
\item The \emph{symmetric rearrangement} $\Omega^{\ast}=\{x\in\mathbb{R}%
^{d}:|x|<r\}$, where $r$ is such that $|\Omega|=|\Omega^{\ast}|$.

\item The \emph{symmetric rearrangement with respect to a hyperplane}. Let
$H\subset\mathbb{R}^{d}$ be a $m$-dimensional hyperplane. Then, we decompose
every $x\in\mathbb{R}^{d}$ as $x=x_{1}+x_{2}$, $x_{1}\in H$, $x_{2}\in
H^{\bot}$ and define $\Omega^{\ast,H}=\{x_{1}+x_{2}:x_{1}\in(\Omega-x_{2}\cap
H)^{\ast}$ and $x_{2}\in P^{\bot}(\Omega)\}$, where $\Omega-x_{2}%
=\{x-x_{2}:x\in\Omega\}$, $(\Omega-x_{2}\cap H)^{\ast}$ denotes the
$m$-dimensional symmetric rearrangement of $(\Omega-x_{2}\cap H)$, and
$P^{\bot}:\mathbb{R}^{d}\rightarrow H^{\bot}$ denotes the usual
orthogonal-projection map.

\item The \emph{monotone rearrangement with respect to the direction}
$y\in\mathbb{R}^{d}$. Given $y\in\mathbb{R}^{d}$ such that $|y|=1$, we
decompose every $x\in\mathbb{R}^{d}$ as $x=\alpha y+x_{2}$ where $\alpha
\in\mathbb{R}$ and $x_{2}$ is orthogonal to $y$. We define $\Omega^{\ast
,y}=\{\alpha y+x_{2}:0\leq\alpha<\mathcal{H}^{1}(\Omega\cap L(x_{2}))$ and
$x_{2}\in P^{\bot}(\Omega)\}$, where $L(x_{2})=\{\mathbb{\beta}y+x_{2}:\beta
\in\mathbb{R}\}$, $\mathcal{H}^{1}$ denotes the $1$-dimensional Hausdorff
measure and $P^{\bot}:\mathbb{R}^{d}\rightarrow\{y\}^{\bot}$ denotes the
orthogonal projection on the subspace orthogonal to $y$.
\end{enumerate}

Note that the rearrangements defined above are area preserving, i.e.,
$|\Omega|=|\Omega^{\ast}|=|\Omega^{\ast,H}|=|\Omega^{\ast,y}|$.

We now consider functions $u:\Omega\subseteq\mathbb{R}^{d}\rightarrow
\mathbb{[}0,\mathbb{\infty)}$ such that $|\{x\in\Omega:u(x)>t\}|$ is finite
for all $t>0$, and we define rearrangement maps as follows.

\begin{enumerate}
\item The \emph{symmetric decreasing rearrangement }$u^{\ast}:\Omega^{\ast
}\rightarrow\mathbb{[}0,\mathbb{\infty)}$, $u^{\ast}(x):=\sup\{c\in
\mathbb{R}:x\in\{u>c\}^\ast\}$ or equivalently $u^{\ast}(x)=\int_{0}^{\infty}%
\chi_{\{u>t\}^{\ast}}(x)\mathrm{d}t$.

\item The \emph{symmetric decreasing rearrangement with respect to the
hyperplane} $H\subset\mathbb{R}^{d}$ $u^{\ast,H}:\Omega^{\ast,H}%
\rightarrow\mathbb{[}0,\mathbb{\infty)}$, $u^{\ast}(x)=\int_{0}^{\infty}%
\chi_{\{u>t\}^{\ast,H}}(x)\mathrm{d}t$.

\item The \emph{monotone decreasing rearrangement with respect to the
direction} $y\in\mathbb{R}^{d}$ $u^{\ast,y}:\Omega^{\ast,y}\rightarrow
\mathbb{[}0,\mathbb{\infty)}$, $u^{\ast}(x)=\int_{0}^{\infty}\chi
_{\{u>t\}^{\ast,y}}(x)\mathrm{d}t$.
\end{enumerate}

We note that, by a direct consequence of the definition, rearranged functions
are measurable and lower semicontinuous. Moreover, their level sets are
rearrangements of the level sets of $u$. We now recall some known properties
of rearrangement maps.

\begin{lemma}
[Rearrangement inequalities]Let $R\Omega$ be one of the rearrangement of
the set $\Omega$ defined above and $Ru$ be the corresponding rearrangement of the function
$u:\Omega\rightarrow\lbrack0,\infty)$. Then, the following inequalities hold true.

\begin{enumerate}
\item \emph{Conservation of }$L^{p}$\emph{-norms}: $\left\Vert u\right\Vert
_{L^{p}(\Omega)}=\left\Vert Ru\right\Vert _{L^{p}(R\Omega)}$ for all
$p\in\lbrack1,\infty]$, $u\in L^{p}(\Omega)$.

\item $\int_{\Omega}uv\leq\int_{R\Omega}\left(  Ru\right)  \left(  Rv\right)
$ for all $u\in L^{p}(\Omega)$, $v\in L^{p/(p-1)}(\Omega)$, $p\in
\lbrack1,\infty]$.

\item \emph{Nonexpansivity of rearrangements}: $\int_{R\Omega}J(Ru-Rv)\leq
\int_{\Omega}J(u-v)$ for all $J:\mathbb{R}\rightarrow\mathbb{R}$ nonnegative,
convex, and such that $J(0)=0$.

\item \emph{P\'{o}lya-Szeg\"{o} inequality}: $\left\Vert \nabla u\right\Vert
_{L^{p}(\Omega)}\geq\left\Vert \nabla(Ru)\right\Vert _{L^{p}(R\Omega)}$ for
all $p\in\lbrack1,\infty]$, $u\in W^{1,p}(\Omega)$. In particular, $Ru\in
W^{1,p}(R\Omega)$ for all $u\in W^{1,p}(\Omega)$.

\item \emph{Fractional P\'{o}lya-Szeg\"{o} inequality \cite{Pa}}:
$[u]_{s,\mathbb{R}^{d}}\geq\lbrack u^{\ast}]_{s,\mathbb{R}^{d}}$ for all
$s\in(0,1)$, $u\in H^{s}(\mathbb{R}^{d})$. Here, $[u]_{s,\mathbb{R}^{d}}$
denotes the usual $s$-Gagliardo seminorm and $H^{s}(\mathbb{R}^{d}%
)=W^{s,2}(\mathbb{R}^{d})$ the usual fractional Sobolev space
\emph{\cite{DNPV}}. In particular, $u^{\ast}\in H^{s}(\mathbb{R}^{d})$ for all
$u\in H^{s}(\mathbb{R}^{d})$.
\end{enumerate}
\end{lemma}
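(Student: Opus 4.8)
The plan is to deduce all five inequalities from two structural facts about each rearrangement map $R$, both of which follow at once from the layer-cake representation $Ru=\int_0^\infty\chi_{\{u>t\}^{R}}\,\mathrm{d}t$ (where $\{u>t\}^{R}$ denotes the set rearrangement of the super-level set $\{u>t\}$) together with the measure-preservation $|A^{R}|=|A|$ recorded above: (i) \emph{equimeasurability}, $|\{Ru>t\}|=|\{u>t\}|$ for every $t>0$; and (ii) \emph{monotonicity}, $\{u>t\}^{R}\subset\{u>s\}^{R}$ whenever $s\le t$, so that the rearranged super-level sets form a nested family. Property 1 is then immediate: writing $\|u\|_{L^p(\Omega)}^p=p\int_0^\infty t^{p-1}|\{u>t\}|\,\mathrm{d}t$ and the same identity for $Ru$, fact (i) equates the two integrals for $p\in[1,\infty)$, while the case $p=\infty$ follows since the essential supremum of $u$ depends only on its distribution function.

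For Property 2 I would expand both factors by the layer-cake formula and apply Tonelli to obtain
\[
\int_\Omega uv=\int_0^\infty\!\!\int_0^\infty|\{u>s\}\cap\{v>t\}|\,\mathrm{d}s\,\mathrm{d}t,
\]
together with the analogous identity for $\int_{R\Omega}(Ru)(Rv)$ in which each level set is replaced by its rearrangement. The claim then reduces to the set estimate $|A\cap B|\le|A^{R}\cap B^{R}|$. Since the rearranged sets are nested by fact (ii), the pair $\{A^{R},B^{R}\}$ is totally ordered by inclusion, whence $|A^{R}\cap B^{R}|=\min\{|A^{R}|,|B^{R}|\}=\min\{|A|,|B|\}\ge|A\cap B|$, which is exactly what is needed.

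Property 3 I would handle by approximation with \emph{two-point symmetrizations} (polarizations). For a single polarization $w\mapsto w^\sigma$ across a hyperplane, comparing the integrand at a reflected pair of points reduces the inequality $\int J(u^\sigma-v^\sigma)\le\int J(u-v)$ to the elementary four-point convexity estimate, valid for any convex $J$, that rearranging two pairs of values so that the larger members lie on the same side does not increase $J(a-b)+J(c-d)$. Each map $R$ in the statement is a limit of iterated polarizations in the relevant $L^p$; I would pass to the limit using Property 1 to control the tails and the continuity of $J$, the hypotheses $J\ge0$ and $J(0)=0$ guaranteeing that the integrals are well defined even when $|\Omega|=\infty$.

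The deepest step, and the one I expect to be the main obstacle, is Property 4, since it is the only place where a genuinely geometric input enters. I would use the coarea formula to write $\|\nabla u\|_{L^p(\Omega)}^p=\int_0^\infty\big(\int_{\{u=t\}}|\nabla u|^{p-1}\,\mathrm{d}\mathcal{H}^{d-1}\big)\,\mathrm{d}t$, bound the inner integral from below by Hölder against the coarea identity $-\tfrac{\mathrm{d}}{\mathrm{d}t}|\{u>t\}|=\int_{\{u=t\}}|\nabla u|^{-1}\,\mathrm{d}\mathcal{H}^{d-1}$, and then invoke the \emph{isoperimetric inequality} appropriate to the symmetrization to compare the perimeter of $\{u>t\}$ with that of its rearrangement; equimeasurability matches the corresponding level-set measures and closes the estimate. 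Property 5, the fractional Pólya--Szegő inequality, rests on a distinct nonlocal argument rather than on the coarea/isoperimetric machinery, and I would simply import it from \cite{Pa}.
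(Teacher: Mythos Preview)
The paper does not actually prove this lemma: it simply records the statements and refers to Kawohl \cite{Ka} for items 1--4 and to Park \cite{Pa} for item 5. Your proposal therefore goes well beyond what the paper does, supplying genuine proof sketches along the standard lines one finds in \cite{Ka}. In that sense there is nothing to compare on the level of strategy; your arguments are essentially the ones those references carry out.

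One imprecision is worth flagging. In your treatment of Property~2 you reduce to the set inequality $|A\cap B|\le |A^{R}\cap B^{R}|$ and justify it by invoking your fact~(ii), ``the rearranged sets are nested''. But fact~(ii), as you stated it, concerns the super-level sets of a \emph{single} function; here $A=\{u>s\}$ and $B=\{v>t\}$ come from two different functions, so (ii) does not apply as written. The conclusion is nonetheless correct, but for a different reason: for the Schwarz symmetrization every $A^{\ast}$ is a centered ball, so any two such sets are automatically comparable; for the Steiner and monotone rearrangements the same holds slice-by-slice in the orthogonal variable, and one integrates the one-dimensional inequality $\min\{|A_{x_2}|,|B_{x_2}|\}\ge |A_{x_2}\cap B_{x_2}|$ over $x_2$. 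With this small correction your argument for Property~2 goes through, and the remaining items are handled by the standard layer-cake, polarization, and coarea/isoperimetric arguments you outline.
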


The above inequality are well known. We refer to \cite{Ka} for a proof of 1-4
and to \cite{Pa} for a proof of 5.

\end{document}